\newcommand\version{November 14, 2008}
\newtheorem{theorem}{Theorem}[section]
\newtheorem{proposition}[theorem]{Proposition}
\newtheorem{lemma}[theorem]{Lemma}
\newtheorem{corollary}[theorem]{Corollary}
\theoremstyle{definition}
\theoremstyle{remark}
\newtheorem{remark}[theorem]{Remark}
\newtheorem{assumption}[theorem]{Assumption}
\numberwithin{equation}{section}
\newcommand{\C}{\mathbb{C}}
\newcommand{\const}{{\rm const}\,}
\renewcommand{\epsilon}{\varepsilon}
\newcommand{\loc}{{\rm loc}}
\newcommand{\N}{\mathbb{N}}
\renewcommand{\phi}{\varphi}
\newcommand{\R}{\mathbb{R}}
\newcommand{\Sph}{\mathbb{S}}
\newcommand{\Z}{\mathbb{Z}}
\newcommand\sm{\setminus\{0\}}
\DeclareMathOperator{\Div}{div}
\DeclareMathOperator{\re}{Re}
\DeclareMathOperator{\spec}{spec}
\DeclareMathOperator{\supp}{supp}
\DeclareMathOperator{\sgn}{sgn}
\begin{document}

\title[Hardy inequality  --- \version]{Non-linear ground state representations \\and sharp Hardy inequalities}

\author{Rupert L. Frank}
\address{Rupert L. Frank, Department of Mathematics,
Princeton University, Washington Road, Princeton, NJ 08544, USA}
\email{rlfrank@math.princeton.edu}

\author{Robert Seiringer}
\address{Robert Seiringer, Department of Physics, Princeton University,
P.~O.~Box 708, Princeton, NJ 08544, USA}
\email{rseiring@princeton.edu}

\begin{abstract}
We determine the sharp constant in the Hardy inequality for fractional Sobolev spaces. To do so, we develop a non-linear and non-local version of the ground state representation, which even yields a remainder term. From the sharp Hardy inequality we deduce the sharp constant in a Sobolev embedding which is optimal in the Lorentz scale. In the appendix, we characterize the cases of equality in the rearrangement inequality in fractional Sobolev spaces.
\end{abstract}


\maketitle


\section{Introduction and main results}

Hardy's inequality plays an important role in many questions from mathematical physics, spectral theory, analysis of linear and non-linear PDE, harmonic analysis and stochastic analysis. It states that
\begin{equation}\label{eq:hardyclass}
 \int_{\R^N} |\nabla u|^p \,dx \geq \left(\frac{|N-p|}p\right)^p \int_{\R^N} \frac{|u(x)|^p}{|x|^p} \,dx \,,
\end{equation}
and holds for all $u\in C_0^\infty(\R^N)$ if $1\leq p < N$, and for
all $u\in C_0^\infty(\R^N\setminus\{0\})$ if $p>N$. The constant on
the right side of \eqref{eq:hardyclass} is sharp and, for $p>1$, not
attained in the corresponding homogeneous Sobolev spaces $\dot
W^1_p(\R^N)$ and $\dot W^1_p(\R^N\setminus\{0\})$, respectively, i.e.,
the completion of $C_0^\infty(\R^N)$ and
$C_0^\infty(\R^N\setminus\{0\})$ with respect to the left side of
\eqref{eq:hardyclass}. If $p=1$, equality holds for any symmetric
decreasing function.

In this paper we are concerned with the fractional analog of Hardy's inequality \eqref{eq:hardyclass}, where the left side is replaced by
\begin{equation}\label{eq:energy}
 \iint_{\R^N\times\R^N} \frac{|u(x)-u(y)|^p}{|x-y|^{N+ps} } \,dx\,dy
\end{equation}
for some $0<s<1$. By scaling the function $|x|^{-p}$ on the right side has to be replaced by $|x|^{-ps}$. For $N\geq 1$ and $0<s<1$  we consider the homogeneous Sobolev spaces $\dot{W}^s_p(\R^N)$ and $\dot{W}^s_p(\R^N\setminus\{0\})$ defined as the completion with respect to \eqref{eq:energy} of $C_0^\infty(\R^N)$ for $1\leq p < N/s$ and $C_0^\infty(\R^N\sm)$ for $p>N/s$, respectively. Our main result is the optimal constant in the fractional Hardy inequality.

\begin{theorem}[\textbf{Sharp fractional Hardy inequality}]\label{main}
 Let $N\geq 1$ and $0<s<1$.  Then for all $u\in \dot{W}^s_p(\R^N)$ in case $1\leq p < N/s$, and for all $u\in \dot{W}^s_p(\R^N\sm)$ in case $p > N/s$,
\begin{equation}\label{eq:main}
 \iint_{\R^N\times\R^N} \frac{|u(x)-u(y)|^p}{|x-y|^{N+ps} } \,dx\,dy
\geq \mathcal C_{N,s,p} \int_{\R^N} \frac{|u(x)|^p}{|x|^{ps}}\,dx
\end{equation}
with
\begin{equation}\label{eq:mainconst}
\mathcal C_{N,s,p} :=
2 \int_0^1 r^{ps-1} \left|1 - r^{(N-ps)/p} \right|^p \Phi_{N,s,p}(r) \, dr\,,
\end{equation}
and
\begin{equation}\label{eq:mainphi}
\begin{split}
\Phi_{N,s,p}(r) := |\Sph^{N-2}| \int_{-1}^1 \frac{\left(1-t^2 \right)^{\tfrac{N-3}2} \,dt}{(1-2rt+r^2)^{\tfrac{N+ps}2}} \,,
\quad N\geq 2 \,, \\
\Phi_{1,s,p}(r) := \left( \frac{1}{(1-r)^{1+ps}} + \frac{1}{(1+r)^{1+ps}} \right) \,,
\quad N=1\,.
\end{split}
\end{equation}
The constant $\mathcal C_{N,s,p}$ is optimal. If $p=1$, equality holds
iff $u$ is proportional to a symmetric-decreasing function. If $p>1$,
the inequality is strict for any function $0\not\equiv u\in
\dot{W}^s_p(\R^N)$ or $\dot{W}^s_p(\R^N\sm)$, respectively.
\end{theorem}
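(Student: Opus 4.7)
The strategy, indicated by the paper's title, is to develop a \emph{non-linear ground state representation}. By the triangle inequality $\lvert\lvert u\rvert(x)-\lvert u\rvert(y)\rvert \le \lvert u(x)-u(y)\rvert$, replacing $u$ by $|u|$ only decreases the left-hand side of \eqref{eq:main} while leaving the right-hand side unchanged, so one may assume $u\ge 0$ (and, by density, $u\in C_0^\infty$). Introduce the \emph{virtual ground state}
\[
\omega(x):=|x|^{-(N-ps)/p},
\]
homogeneous of the unique degree that makes both sides of \eqref{eq:main} scale identically; a scaling computation shows that $\omega$ formally saturates the inequality, although $\omega\notin\dot W^s_p$. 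Writing $u=\omega v$ with $v\ge 0$, the elementary algebraic identity
\[
u(x)-u(y) = v(y)\bigl(\omega(x)-\omega(y)\bigr) + \omega(x)\bigl(v(x)-v(y)\bigr),
\]
together with its $x\leftrightarrow y$ companion, is the starting point for the representation.

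The crucial step is the scalar convexity inequality $|a|^p\ge |b|^p+p\,|b|^{p-2}b\,(a-b)$ (valid for all $a,b\in\R$ and $p\ge 1$, strict iff $a=b$ when $p>1$). Applying it with $a=u(x)-u(y)$ and $b=v(y)(\omega(x)-\omega(y))$, and then symmetrizing in $x\leftrightarrow y$, yields a pointwise lower bound whose integral against the symmetric kernel $|x-y|^{-N-ps}$ separates into a leading ``Hardy'' term and a remainder that is arranged, after invoking the $x\leftrightarrow y$ symmetry and the scaling/rotational invariance of $\omega$, to be manifestly non-negative. The Hardy term is
\[
\iint\frac{v(y)^p\,|\omega(x)-\omega(y)|^p}{|x-y|^{N+ps}}\,dx\,dy = \mathcal{C}_{N,s,p}\int\frac{|u(y)|^p}{|y|^{ps}}\,dy,
\]
which is verified by Fubini and the substitution $x=|y|z$: the inner $x$-integral equals $|y|^{-N}\int_{\R^N}\bigl|1-|z|^{-(N-ps)/p}\bigr|^p\,|z-e|^{-N-ps}\,dz$ for the unit vector $e=y/|y|$, and is rotation-invariant. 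Passing to polar coordinates $z=r\theta$, the angular integral is precisely $\Phi_{N,s,p}(r)$; splitting the radial range into $r<1$ and $r>1$ and applying the inversion identity $\Phi_{N,s,p}(1/r)=r^{N+ps}\Phi_{N,s,p}(r)$ (read off directly from \eqref{eq:mainphi} since $1-2(1/r)t+1/r^2 = r^{-2}(1-2rt+r^2)$) folds the second range onto the first and produces exactly the formula \eqref{eq:mainconst}.

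Sharpness is established by testing with smooth compactly supported approximations $u_n=\chi_n\omega$ of the virtual ground state; both integrals in \eqref{eq:main} diverge at the same logarithmic rate, so the Rayleigh quotient tends to $\mathcal{C}_{N,s,p}$. For $p>1$, the strict convexity in the key step forces $v(x)=v(y)$ for almost every $(x,y)$ whenever equality holds in \eqref{eq:main}, i.e.\ $u\equiv c\omega$; but $c\omega\in\dot W^s_p$ only for $c=0$, which gives the asserted strictness. For $p=1$ convexity is not strict, and the characterization of equality reduces to the rearrangement inequality proved in the Appendix: the left-hand side of \eqref{eq:main} decreases under symmetric decreasing rearrangement, while the right-hand side increases, and symmetric-decreasing functions saturate the inequality. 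The main obstacle is engineering the convexity step so that the pointwise cross contributions organize, after integration, into a non-negative residual: for $p=2$ this is an exact Frank--Seiringer product-rule identity, but for $p\ne 2$ the cross terms have indefinite sign pointwise and non-negativity must be recovered through a careful symmetrization in $x\leftrightarrow y$ combined with the scaling structure of $\omega$.
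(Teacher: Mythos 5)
Your overall plan (reduce to $u\ge 0$, substitute $u=\omega v$ with $\omega(x)=|x|^{-(N-ps)/p}$, isolate a Hardy term plus a non-negative remainder, then check sharpness by testing with truncations of $\omega$) matches the paper's strategy, and your Fubini/polar-coordinate computation of the constant, including the inversion identity $\Phi_{N,s,p}(1/r)=r^{N+ps}\Phi_{N,s,p}(r)$ that folds $r>1$ onto $r<1$, is correct and is essentially how the paper arrives at \eqref{eq:mainconst}. But there is a genuine gap at the one place you yourself flag as ``the main obstacle.''

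The standard convexity bound $|a|^p\ge|b|^p+p|b|^{p-2}b(a-b)$, applied with $a=u(x)-u(y)$ and $b=v(y)(\omega(x)-\omega(y))$, does \emph{not} produce a non-negative residual after symmetrization. The symmetrized cross term is
\begin{equation*}
\tfrac p2\,|\omega(x)-\omega(y)|^{p-2}\bigl(\omega(x)-\omega(y)\bigr)\bigl(v(x)-v(y)\bigr)\bigl[v(y)^{p-1}\omega(x)-v(x)^{p-1}\omega(y)\bigr],
\end{equation*}
and the bracket has no fixed sign (take $\omega(x)$ slightly larger than $\omega(y)$ and $v(x)\gg v(y)$ to make it negative while the other two factors are positive). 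No amount of ``scaling structure of $\omega$'' rescues this: if you try to evaluate the cross term by Fubini you need the Euler--Lagrange equation
$2\int(\omega(x)-\omega(y))|\omega(x)-\omega(y)|^{p-2}|x-y|^{-N-ps}\,dy=V(x)\omega(x)^{p-1}$,
and plugging it in yields a \emph{negative} multiple of $\int u^p|x|^{-ps}\,dx$ plus a term that is still not sign-definite. Moreover the integral defining $V$ is only conditionally convergent (near the diagonal the integrand behaves like $|x-y|^{-N-1+p(1-s)}$, not absolutely integrable when $p(1-s)\le 1$), so the interchanges of integration in this route need regularization you have not supplied.

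What the paper actually proves and uses is a different, tailor-made pointwise inequality (its Lemma \ref{numbers}): for $p\ge1$, $0\le t\le1$, $a\in\C$,
\begin{equation*}
|a-t|^p\ \ge\ (1-t)^{p-1}\bigl(|a|^p-t\bigr),
\end{equation*}
with equality for $p>1$ only if $a=1$ or $t=0$. Applied with $t=\omega(y)/\omega(x)$ (after WLOG $\omega(x)\ge\omega(y)$) and $a=v(x)/v(y)$, it says exactly that
\begin{equation*}
\Phi_u(x,y):=|u(x)-u(y)|^p-\bigl(\omega(x)|v(x)|^p-\omega(y)|v(y)|^p\bigr)\bigl(\omega(x)-\omega(y)\bigr)|\omega(x)-\omega(y)|^{p-2}\ \ge\ 0,
\end{equation*}
and the subtracted term is chosen precisely so that, after regularizing $k$ by $k_\epsilon$, symmetrizing in $x\leftrightarrow y$, and using the Euler--Lagrange equation, it integrates against the kernel to $\int V|u|^p\,dx$. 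This is \emph{not} the first-order Taylor expansion of $|u(x)-u(y)|^p$ around $b=v(y)(\omega(x)-\omega(y))$; note the appearance of $(1-t)^{p-1}$ rather than the linear coefficient. The content of the paper's ground-state representation lies in this specific inequality, and your proposal as written does not contain it. (For $p\ge2$ the paper also proves the refinement with the extra term $c_p t^{p/2}|a-1|^p$, which is what yields Theorem \ref{remainder}.) The rest of your sketch --- the sharpness via truncated ground states, strictness for $p>1$, and the $p=1$ case via the rearrangement inequality --- is in line with the paper.
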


For $p=1$ and, e.g., $N=1$ or $N=3$ one finds
$$
\mathcal C_{1,s,1} = \frac{2^{2-s}}{s} \,, \quad
\mathcal C_{3,s,1} = 4\pi \frac{2^{1-s}}{s(s-1)} \,.
$$
For general values of $p$ and $N$ the double integral is easily evaluated numerically or estimated analytically (see also \eqref{eq:constant1d} and \eqref{eq:constantn} below for different expressions).
For $p=2$ one can evaluate $\mathcal C_{N,s,p}$ via Fourier transform \cite{FLS} and obtains the well-known expression 
\begin{equation}
 \label{eq:mainconst2}
\mathcal C_{N,s,2} = 2 \pi^{N/2} \frac{\Gamma((N+2s)/4)^2}{\Gamma((N-2s)/4)^2} \frac{|\Gamma(-s)|}{\Gamma((N+2s)/2)} \,. 
\footnote{This corrects a typographical error in the published version of this paper. We thank C. A. Sloane for pointing this out to us.}
\end{equation}
This was first derived by Herbst \cite{H}; see also \cite{KPS,B,Y} for different proofs. 
Indeed, Herbst determined the sharp constants in the inequality
\begin{equation}\label{eq:herbst}
 \left\| (-\Delta)^{s/2} u \right\|_p^p 
\geq \tilde{\mathcal C}_{N,s,p} \int_{\R^N} \frac{|u(x)|^p}{|x|^{ps}}\,dx
\end{equation}
for arbitrary $1<p<N/s$. For $p=2$ the left side is well-known to be proportional to the left side in \eqref{eq:main}. For $p\neq 2$ and $0<s<1$, however, the expression on the left side is \emph{not} equivalent to \eqref{eq:energy}. There is a one-sided inequality according to whether $1<p<2$ or $p>2$; see, e.g., \cite[Ch. V]{S}. In particular, the sharp constant $\tilde{\mathcal C}_{N,s,1}$ in \eqref{eq:herbst} for $p=1$ is zero, as opposed to \eqref{eq:main}.

\bigskip

One of our motivations is the recent work by Bourgain, Brezis and Mironescu \cite{BBM1, BBM2} and by Maz'ya and Shaposhnikova \cite{MS}. Consider the case $N>ps$, and recall that the Sobolev embedding theorem asserts that $\dot{W}^s_p(\R^N) \subset L_{p^*}(\R^N)$ for $p^*=Np/(N-ps)$ with
\begin{equation}\label{eq:sobemb}
 \iint_{\R^N\times\R^N} \frac{|u(x)-u(y)|^p}{|x-y|^{N+ps} } \,dx\,dy \geq \mathcal S_{N,s,p} \|u\|_{p^*}^p \,,
\end{equation}
see, e.g., \cite[Thms. 7.34, 7.47]{Ad}. The optimal values of the constants $\mathcal S_{N,s,p}$ are unknown. In \cite{BBM2} Bourgain, Brezis and Mironescu obtained quantitative estimates on the constants $\mathcal S_{N,s,p}$ which reflect the correct behavior in the limits $s\to 1$ or $p\to N/s$. (More precisely, these authors studied the corresponding problem for functions on a cube with zero average, but this problem is equivalent to the problem on the whole space, see \cite[Rem.~1]{BBM2} or \cite[Cor.~1]{MS}.) The proof in \cite{BBM2} relies on advanced tools from harmonic analysis. It was simplified and extended by Maz'ya and Shaposhnikova \cite{MS} who showed that the sharp constant in \eqref{eq:sobemb} satisfies
\begin{equation}\label{eq:bbm}
\mathcal S_{N,s,p} \geq c(N,p) \, \frac{(N-ps)^{p-1}}{s(1-s)} \,.
\end{equation}
The key observation in \cite{MS} was that \eqref{eq:bbm} follows from a sufficiently good bound on the constant in the fractional Hardy inequality. Maz'ya and Shaposhnikova did not, however, determine the optimal constants in this inequality. Their bound
\begin{equation}\label{eq:ms}
\mathcal C_{N,s,p} \geq \tilde c(N,p) \, \frac{(N-ps)^p}{s(1-s)} \,,
\end{equation}
which leads to the Bourgain--Brezis--Mironescu result \eqref{eq:bbm}, is easily recovered from our explicit expression for $\mathcal C_{N,s,p}$.

In fact, in Section \ref{sec:embedding} below we show that our sharp Hardy inequality implies an even stronger result. Namely, together with a symmetrization argument it yields a simple proof of the embedding
\begin{equation}\label{eq:embeddinglor}
\dot W^s_p(\R^N) \subset L_{p^*,p}(\R^N),
\quad 1\leq p <N/s\,,\ p^*=Np/(N-ps)\,,
\end{equation}
due to Peetre \cite{P}. Here $L_{p^*,p}(\R^N)$ denotes the Lorentz space, the definition of which is recalled in Section~\ref{sec:embedding}. Embedding \eqref{eq:embeddinglor} is optimal in the Lorentz scale. Since $L_{p^*,p}(\R^N)\subset L_{p^*}(\R^N)$ with strict inclusion, \eqref{eq:embeddinglor} is stronger than \eqref{eq:sobemb}. While we know only of non-sharp proofs of \eqref{eq:embeddinglor} via interpolation theory, our Theorem \ref{embedding} below gives the optimal constant in this embedding and characterizes all optimizers. To do so, we need to characterize the optimizers in the rearrangement inequality by Almgren and Lieb for the functional \eqref{eq:energy}, see Theorem \ref{rearrstrict}. For another recent application of Lorentz norms in connection with Hardy-Sobolev inequalities we refer to \cite{MS2}.

\bigskip

In contrast to the case $p=2$, there seems to be no way to prove
\eqref{eq:main} via Fourier transform if $p\neq 2$. Instead, our proof
is based on the observation that $|x|^{-(N-ps)/p}$ is a positive
solution of the Euler-Lagrange equation associated with
\eqref{eq:main} (but fails to lie in $\dot W^s_p(\R^N)$ or $\dot
W^s_p(\R^N\sm)$, respectively). Writing $u= |x|^{-(N-ps)/p} v$,
\eqref{eq:main} becomes an inequality for the unknown function
$v$. While it is well-known and straightforward to prove
\eqref{eq:hardyclass} in this way, this approach seems to be new in
the fractional case.

One virtue of our approach is that it automatically yields remainder terms. In particular, for $p\geq 2$ we obtain the following strengthening of \eqref{eq:main}.

\begin{theorem}[\textbf{Sharp Hardy inequality with remainder}]\label{remainder}
Let $N\geq 1$, $0<s<1$ and $p\geq 2$.  Then for all $u\in \dot{W}^s_p(\R^N)$ in case $p < N/s$, and for all $u\in \dot{W}^s_p(\R^N\sm)$ in case $p > N/s$, and $v=|x|^{(N-ps)/p} u$,
\begin{align}\label{eq:remainder}
\iint_{\R^N\times\R^N} \frac{|u(x)-u(y)|^p}{|x-y|^{N+ps} } \,dx\,dy
& - \mathcal C_{N,s,p} \int_{\R^N} \frac{|u(x)|^p}{|x|^{ps}}\,dx \notag \\
& \geq c_p\, \iint_{\R^N\times\R^N} \frac{|v(x)-v(y)|^p}{|x-y|^{N+ps} } \frac{dx}{|x|^{(N-ps)/2}} \frac{dy}{|y|^{(N-ps)/2}}
\end{align}
where $\mathcal C_{N,s,p}$ is given by \eqref{eq:mainconst} and $0<c_p\leq 1$ is given by
\begin{equation}\label{eq:gsrconst}
c_p:=\min_{0<\tau<1/2} \left((1-\tau)^p -\tau^p + p\tau^{p-1}\right)\,.
\end{equation}
If $p=2$, then \eqref{eq:remainder} is an equality with $c_2=1$.
\end{theorem}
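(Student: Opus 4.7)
The plan is a non-linear \emph{ground state representation}: substitute $u(x) = \omega(x)v(x)$ with $\omega(x) := |x|^{-(N-ps)/p}$, the formal positive solution of the Euler--Lagrange equation
\begin{equation*}
2\,\mathrm{p.v.}\!\int_{\R^N}\!\frac{|\omega(x)-\omega(y)|^{p-2}(\omega(x)-\omega(y))}{|x-y|^{N+ps}}\,dy \;=\; \mathcal C_{N,s,p}\,\frac{\omega(x)^{p-1}}{|x|^{ps}},
\end{equation*}
which I take as having been established in the course of proving Theorem~\ref{main}. Multiplying this identity by $|u(x)|^p/\omega(x)^{p-1}$, integrating over $x$, and exploiting the symmetry $(x,V,a)\leftrightarrow(y,W,b)$ of the resulting double integral (with $a=\omega(x), b=\omega(y), V=v(x), W=v(y)$), one obtains
\begin{equation*}
\mathcal C_{N,s,p}\!\int_{\R^N}\!\frac{|u|^p}{|x|^{ps}}\,dx \;=\; \iint\!\frac{(a-b)|a-b|^{p-2}\bigl(aV^p-bW^p\bigr)}{|x-y|^{N+ps}}\,dx\,dy,
\end{equation*}
after first reducing to $u,v\geq 0$ via $\bigl||u(x)|-|u(y)|\bigr|\leq|u(x)-u(y)|$. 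Since $\omega(x)^{p/2}=|x|^{-(N-ps)/2}$, the target remainder in \eqref{eq:remainder} is $c_p\iint(ab)^{p/2}|V-W|^p|x-y|^{-(N+ps)}\,dx\,dy$, so \eqref{eq:remainder} will follow from the pointwise algebraic inequality
\begin{equation*}
|aV - bW|^p - (a-b)|a-b|^{p-2}\bigl(aV^p - bW^p\bigr) \;\geq\; c_p\,(ab)^{p/2}\,|V - W|^p,
\end{equation*}
valid for all $a,b>0$ and $V,W\geq 0$.

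For $p = 2$ this pointwise inequality is in fact the classical algebraic identity $(aV-bW)^2 = ab(V-W)^2 + (a-b)(aV^2 - bW^2)$, which yields $c_2 = 1$ and equality in \eqref{eq:remainder}. For $p > 2$ I expect the constant $c_p$ in \eqref{eq:gsrconst} to emerge from the underlying one-dimensional \emph{strengthened convexity} estimate
\begin{equation*}
|X|^p - |Y|^p - p|Y|^{p-2}Y(X-Y) \;\geq\; c_p\,|X-Y|^p, \qquad X, Y \in \R,
\end{equation*}
whose sharp constant is $\min_{0<\tau<1/2}g(\tau)$ with $g(\tau) := (1-\tau)^p - \tau^p + p\tau^{p-1}$: by homogeneity and translation one reduces to $X = 1-\tau,\ Y = -\tau$ (so $|X-Y|=1$), whereupon the left-hand side equals exactly $g(\tau)$, and an elementary calculus argument locates the minimum at an interior critical point of $g$ on $(0,1/2)$.

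The main obstacle is converting this scalar strengthened convexity into the two-variable pointwise inequality above. The natural attempt is to apply it with $X = aV - bW$ and $Y$ chosen so that $|X-Y|^p = (ab)^{p/2}|V-W|^p$---for example $Y = (\sqrt{a}-\sqrt{b})(\sqrt{a}V + \sqrt{b}W)$, since then $X-Y=\sqrt{ab}\,(V-W)$---and then to reorganise the leftover $|Y|^p + p|Y|^{p-2}Y(X-Y)$ contribution so that it is bounded below by the target middle term $(a-b)|a-b|^{p-2}(aV^p - bW^p)$, possibly after averaging with its counterpart obtained by swapping $(a,V)\leftrightarrow(b,W)$ or invoking Young/AM--GM to separate $\sqrt{a}V+\sqrt{b}W$ back into its constituents. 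This step is delicate because one must preserve the sharp $c_p$ while promoting the one-variable statement into the two-variable form; the case $p = 2$, where both inequalities collapse to exact identities, serves as a useful sanity check.
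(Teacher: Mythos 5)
Your framework---the ground state substitution $u=\omega v$ with $\omega=|x|^{-(N-ps)/p}$, integrating the Euler--Lagrange identity against $|u|^p/\omega^{p-1}$ and symmetrizing to get an exact identity, and then reducing to a two-variable pointwise algebraic inequality---is exactly the structure of the paper's proof (Propositions~\ref{hardy} and~\ref{gsr}, applied in Section~\ref{sec:proof}). After normalizing by homogeneity (setting $a=1$, $b=t\in[0,1]$, $W=1$, $A=V$), the inequality you isolate,
\[
|aV-bW|^p-(a-b)|a-b|^{p-2}\bigl(aV^p-bW^p\bigr)\;\geq\;c_p\,(ab)^{p/2}|V-W|^p,
\]
becomes precisely the paper's Lemma~\ref{numbers}, inequality~\eqref{eq:numbersimproved}: $|A-t|^p\geq(1-t)^{p-1}(|A|^p-t)+c_p\,t^{p/2}|A-1|^p$. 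So the reduction is correct; the whole weight of the argument rests on that one lemma.

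The gap is in your proposed proof of that lemma. You hope to deduce it from the one-variable strengthened convexity $|X|^p\geq|Y|^p+p|Y|^{p-2}Y(X-Y)+c_p|X-Y|^p$ via the substitution $X=aV-bW$, $Y=(\sqrt a-\sqrt b)(\sqrt aV+\sqrt bW)$ (so that $X-Y=\sqrt{ab}(V-W)$), followed by an estimate
\[
|Y|^p+p|Y|^{p-2}Y(X-Y)\;\geq\;(a-b)|a-b|^{p-2}\bigl(aV^p-bW^p\bigr).
\]
This last ``leftover'' inequality is \emph{false} for $p>2$. Take $W=0$, $V=1$, $a=1$, $b=r^2$ with $0<r<1$: then $X=1$, $Y=1-r$, $X-Y=r$, and the inequality becomes (after dividing by $(1-r)^{p-1}$)
\[
1+(p-1)r\;\geq\;(1+r)^{p-1},
\]
which fails for every $r>0$ when $p>2$ by strict convexity of $r\mapsto(1+r)^{p-1}$. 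Averaging with the $(a,V)\leftrightarrow(b,W)$ swap only flips the overall sign of $X$, $Y$, $X-Y$ and gives back the same inequality, so it does not rescue the argument. The trouble is directional: as the paper's Remark after Lemma~\ref{numbers} points out, the one-variable estimate~\eqref{eq:convexloc2} is a \emph{degenerate limit} ($a\to1$, $t\to1$ with $1-t=\tau(1-a)$) of the two-variable inequality~\eqref{eq:numbersimproved}, so one cannot expect to recover the stronger statement from the weaker one by a pointwise substitution. The paper instead proves~\eqref{eq:numbersimproved} directly: defining $f(a,t)=\bigl(|a-t|^p-(1-t)^{p-1}(|a|^p-t)\bigr)/\bigl(t^{p/2}|a-1|^p\bigr)$, it shows $f\geq1$ for $a>1$ by monotonicity in $a$, rules out interior minima in $\{a<1,\ 0<t<1\}$ by a sign analysis of $\partial f/\partial a$ and $\partial f/\partial t$, and identifies the infimum $c_p$ as arising only along the degenerate corner $a\to1$, $t\to1$; the complex case is then reduced to the real one by convexity in $\operatorname{Re}(a-t)$. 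You would need to supply an argument of this sort (or an equivalent one) to close the proof.

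A secondary caveat: the reduction to $u\geq0$ via $\bigl||u(x)|-|u(y)|\bigr|\leq|u(x)-u(y)|$ is not free here, because replacing $u$ by $|u|$ also shrinks the remainder term $\iint|v(x)-v(y)|^p\cdots$ on the right side of~\eqref{eq:remainder} (since $v$ gets replaced by $|v|$). The paper sidesteps this by proving Lemma~\ref{numbers} for all complex $a$, so no such reduction is needed.
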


We refer to the substitution of $u$ by $v= \omega^{-1} u$, where $\omega$ is a positive solution of the Euler-Lagrange equation of the functional under consideration, as `ground state substitution'. In the \emph{linear} and \emph{local} case, such representations go back at least to Jacobi and have numerous applications, among others, in the spectral theory of Laplace and Schr\"odinger operators (see the classical references \cite{Bi,He} and also \cite{D}), constructive quantum field theory (especially in the work by Segal, Nelson, Gross, and Glimm-Jaffe; see, e.g., \cite{GJ}) and Allegretto-Piepenbrink theory (developed in particular by Allegretto, Piepenbrink and Agmon; see, e.g., \cite{M,Pi} for references). Our goal in this paper is to derive a \emph{non-local} and \emph{non-linear} analog of such a representation. Despite all these applications, even in the linear case a non-local version of the ground state representation has only recently been found \cite{FLS}. While we were only interested in a special case in \cite{FLS}, here we wish to show that this formula holds in a much more general setting. Moreover, for $p> 2$ we will find a non-linear analog of this representation formula in the form of an inequality. This is the topic of Section \ref{sec:general} where we consider functionals of the form \eqref{eq:energy} with $|x-y|^{-N-ps}$ replaced by an arbitrary symmetric and non-negative, but not necessarily translation invariant kernel.

This paper is organized as follows. In Section \ref{sec:general} we derive Hardy inequalities and ground state representations in a general setting and in Section \ref{sec:proof} we apply this method to prove Theorems \ref{main} and \ref{remainder}. In Section \ref{sec:embedding} we show that Theorem \ref{main} implies the optimal Sobolev embedding \eqref{eq:embeddinglor} by using some facts from Appendix \ref{sec:rearr} about rearrangement in fractional Sobolev spaces.

\bigskip
{\em Acknowledgments.}
The authors wish to thank E. Lieb for helpful discussions. This work was supported by DAAD grant D/06/49117 (R.F.), by U.S. National Science Foundation grant PHY 06 52356 and an A.P. Sloan Fellowship (R.S.).


\section{Ground state substitution}\label{sec:general}

\subsection{General Hardy inequalities}

We fix $N\geq 1$, $p\geq 1$ and a non-negative measurable function $k$ on $\R^N\times\R^N$ satisfying $k(x,y)=k(y,x)$ for all $x,y\in\R^N$. Our goal in this section is to provide a condition under which a Hardy inequality for the functional
$$
E[u] := \iint_{\R^N\times\R^N} |u(x)-u(y)|^p k(x,y) \,dx\,dy \ .
$$
holds. Loosely speaking, our assumption is that there exists a positive function $\omega$ satisfying the equation
\begin{equation}\label{eq:el}
 2\ \int_{\R^N} \left(\omega(x)-\omega(y)\right) \left|\omega(x)-\omega(y)\right|^{p-2} k(x,y) \,dy
= V(x) \omega(x)^{p-1}
\end{equation}
for some real-valued function $V$ on $\R^N$. We emphasize that if $k$ is too singular on the diagonal (for instance, in our case of primary interest $k(x,y)=|x-y|^{-N-ps}$, $s>0$) the integral on the left side will not be convergent and some regularization of principal value type will be needed. We think of $\omega$ as the `virtual ground state' corresponding to the energy functional $E[u] - \int V|u|^p\,dx$.

We formulate the precise meaning of \eqref{eq:el} as

\begin{assumption}\label{ass:el}
 Let $\omega$ be a positive, measurable function on $\R^N$. There exists a family of measurable functions $k_\epsilon$, $\epsilon>0$, on $\R^N\times\R^N$ satisfying $k_\epsilon(x,y)=k_\epsilon(y,x)$, $0\leq k_\epsilon(x,y) \leq k(x,y)$ and
\begin{equation}\label{eq:assk}
\lim_{\epsilon\to 0} k_\epsilon(x,y) = k(x,y)
\end{equation}
for a.e. $x,y\in\R^N$. Moreover, the integrals
\begin{equation}\label{eq:elreg}
 V_\epsilon(x) := 2\ \omega(x)^{-p+1} \int_{\R^N} \left(\omega(x)-\omega(y)\right) \left|\omega(x)-\omega(y)\right|^{p-2} k_\epsilon(x,y) \,dy
\end{equation}
are absolutely convergent for a.e. $x$, belong to $L_{1,\loc}(\R^N)$ and $V:=\lim_{\epsilon\to 0} V_\epsilon$ exists weakly in $L_{1,\loc}(\R^N)$, i.e., $\int V_\epsilon g\,dx \to \int V g\,dx$ for any bounded $g$ with compact support.
\end{assumption}

The following is a general version of Hardy's inequality.

\begin{proposition}\label{hardy}
Under Assumption \ref{ass:el}, for any $u$ with compact support and $E[u]$ and $\int V_+ |u|^p \,dx$ finite one has
\begin{equation}\label{eq:hardy}
 E[u] \geq \int_{\R^N} V(x) |u(x)|^p \,dx \ .
\end{equation}
\end{proposition}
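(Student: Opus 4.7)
The proof plan is a non-linear ground state substitution. In the linear case $p=2$ there is the algebraic identity
\[
(u(x)-u(y))^2 = \omega(x)\omega(y)\left(\tfrac{u(x)}{\omega(x)}-\tfrac{u(y)}{\omega(y)}\right)^2 + (\omega(x)-\omega(y))\left(\tfrac{u(x)^2}{\omega(x)}-\tfrac{u(y)^2}{\omega(y)}\right),
\]
which, after multiplication by $k_\epsilon$, integration, symmetrization via $k_\epsilon(x,y)=k_\epsilon(y,x)$, and use of the defining relation \eqref{eq:elreg} of $V_\epsilon$, yields \eqref{eq:hardy} with a manifestly non-negative remainder. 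For general $p\geq 1$ no such algebraic identity is available, and the plan is to replace it by a pointwise \emph{inequality} playing the same role.

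The core step is to prove
\[
|a-b|^p \;\geq\; (\alpha-\beta)\,|\alpha-\beta|^{p-2}\left(\frac{|a|^p}{\alpha^{p-1}} - \frac{|b|^p}{\beta^{p-1}}\right), \qquad a,b\in\C,\ \alpha,\beta>0.
\]
Since $|a-b|\geq \bigl||a|-|b|\bigr|$, it suffices to treat $a,b\geq 0$; after the substitution $a=\alpha s$, $b=\beta t$ and using the symmetry between $(\alpha,s)$ and $(\beta,t)$, one may further assume $\alpha\geq\beta>0$, so the claim reduces to $|\alpha s - \beta t|^p \geq (\alpha-\beta)^{p-1}(\alpha s^p - \beta t^p)$. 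Fixing $\alpha,\beta,s$ and regarding this as a one-variable inequality in $t\geq 0$, a short calculus argument shows that the derivative vanishes only at $t=s$, where both sides equal $(\alpha-\beta)^p s^p$; the function is non-negative at $t=0$ and tends to $+\infty$ as $t\to\infty$, so zero is the global minimum. The case $p=1$ is the ordinary triangle inequality.

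Given the pointwise inequality, the rest is routine. For $u$ bounded with compact support, apply it with $(a,b,\alpha,\beta)=(u(x),u(y),\omega(x),\omega(y))$, multiply by the approximating kernel $k_\epsilon$ from Assumption~\ref{ass:el}, and integrate; the symmetry of $k_\epsilon$ folds the two terms on the right, and the definition \eqref{eq:elreg} of $V_\epsilon$ produces
\[
\iint_{\R^N\times\R^N} |u(x)-u(y)|^p k_\epsilon(x,y)\,dx\,dy \;\geq\; \int_{\R^N} V_\epsilon(x)\,|u(x)|^p\,dx.
\]
Sending $\epsilon\to 0$, dominated convergence (with $k_\epsilon\leq k$, $k_\epsilon\to k$ a.e., and $E[u]<\infty$) controls the left-hand side, and the weak $L_{1,\loc}$ convergence $V_\epsilon\to V$ paired with $|u|^p$ bounded and compactly supported controls the right-hand side. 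The boundedness hypothesis is removed by the $1$-Lipschitz truncations $u_M:=(\sgn u)\min(|u|,M)$, which satisfy $E[u_M]\leq E[u]$ by contractivity and converge to $u$; passage to $M\to\infty$ via dominated and monotone convergence on the respective sides, together with $\int V_+|u|^p<\infty$, recovers the full inequality. The principal obstacle is the pointwise inequality itself: in contrast to $p=2$ there is no algebraic identity, and one must verify via optimization that $s=t$ is indeed a global, not merely local, minimum.
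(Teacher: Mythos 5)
Your proposal is correct and follows essentially the same approach as the paper: the pointwise inequality you formulate, once normalized by setting $\alpha=1$ and $\beta=t\in[0,1]$, is precisely the paper's Lemma \ref{numbers}, and the remaining steps (symmetrization via $k_\epsilon(x,y)=k_\epsilon(y,x)$ with the definition \eqref{eq:elreg}, passage to the limit $\epsilon\to 0$ via dominated convergence and weak $L_{1,\loc}$ convergence, and removal of the boundedness restriction by truncation) match the paper's argument. The only cosmetic difference is the parameterization used to prove the elementary inequality, where you optimize in $t$ with $\alpha,\beta,s$ fixed rather than reducing to $a\geq t^{1/p}$ real and examining $(a-t)^p/(a^p-t)$.
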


In applications where additional properties of $k$ and $V$ are
available, the assumption that $u$ has compact support can typically
be removed by some limiting argument. It appears here because we want
to work with the rather minimal Assumption~\ref{ass:el}.

Our next result improves this in the case $p\geq 2$ by giving an explicit remainder estimate. It involves the functional
$$
E_\omega[v] := \iint_{\R^N\times\R^N} |v(x)-v(y)|^p \, \omega(x)^{\tfrac p2} k(x,y) \omega(x)^{\tfrac p2} \,dx\,dy
$$
and is a non-linear analog of what is known as `ground state representation formula'.

\begin{proposition}\label{gsr}
Let $p\geq 2$. Under Assumption \ref{ass:el}, for any $u$ with compact support write $u=\omega v$ and assume that $E[u]$, $\int V_+ |u|^p \,dx$, and $E_\omega[v]$ are finite. Then
\begin{equation}\label{eq:gsr}
 E[u] - \int_{\R^N} V(x) |u(x)|^p \,dx \geq c_p \, E_\omega[v]
\end{equation}
with $c_p$ from \eqref{eq:gsrconst}. If $p=2$, then \eqref{eq:gsr} is an equality with $c_2=1$.
\end{proposition}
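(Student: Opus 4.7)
The plan is to follow the argument of Proposition~\ref{hardy} while integrating a sharper pointwise inequality that retains a quadratic remainder. The central algebraic fact I would establish is that, for all real $a,b$ and all $\alpha,\beta>0$,
\begin{equation*}
|\alpha a - \beta b|^p - (|a|^p\alpha - |b|^p\beta)(\alpha-\beta)|\alpha-\beta|^{p-2} \;\geq\; c_p\,(\alpha\beta)^{p/2}|a-b|^p,
\end{equation*}
with $c_p$ as in~\eqref{eq:gsrconst}. For $p=2$, a direct expansion of both sides produces the exact identity $(\alpha a - \beta b)^2 - (a^2\alpha - b^2\beta)(\alpha-\beta) = \alpha\beta(a-b)^2$, so the inequality is an equality and $c_2=1$. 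For $p>2$ this pointwise inequality is nontrivial, but by the symmetry $(\alpha,a)\leftrightarrow(\beta,b)$ and joint homogeneity in $(\alpha,\beta)$ and in $(a,b)$ one may normalize (for instance $\alpha+\beta=1$ with $\tau=\beta\in(0,1/2]$) and reduce the statement to a one-parameter optimization whose value matches the minimum defining $c_p$. A natural route is to trace this back to the one-variable convexity inequality $|1+t|^p\geq 1+pt+c_p|t|^p$, whose sharp constant equals $c_p$ via the substitution $t=-1/\tau$.

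Once the pointwise inequality is in hand, I would apply it with $a=v(x)$, $b=v(y)$, $\alpha=\omega(x)$, $\beta=\omega(y)$, multiply by the regularized kernel $k_\epsilon(x,y)$ of Assumption~\ref{ass:el}, and integrate over $\R^N\times\R^N$. Symmetrizing in $(x,y)$ using $k_\epsilon(x,y)=k_\epsilon(y,x)$ and swapping variables, the Euler--Lagrange term on the left collapses via~\eqref{eq:elreg} and $u=\omega v$ to
\begin{equation*}
\iint \bigl(|v(x)|^p\omega(x)-|v(y)|^p\omega(y)\bigr)(\omega(x)-\omega(y))|\omega(x)-\omega(y)|^{p-2} k_\epsilon(x,y)\,dx\,dy \;=\; \int V_\epsilon(x)|u(x)|^p\,dx.
\end{equation*}
This yields the regularized form of~\eqref{eq:gsr}. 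Sending $\epsilon\to0$: since $0\leq k_\epsilon\leq k$ and $k_\epsilon\to k$ a.e., the finiteness hypotheses $E[u],E_\omega[v]<\infty$ together with dominated (or monotone) convergence handle the first and third terms; the weak $L_{1,\loc}$ convergence $V_\epsilon\to V$ handles the middle one after truncating $u$ so that $|u|^p$ is a bounded, compactly supported test function, the truncation being removed afterwards using compact support of $u$ and $\int V_+|u|^p\,dx<\infty$, exactly as in the proof of Proposition~\ref{hardy}. For $p=2$ the pointwise step is an equality, so \eqref{eq:gsr} is an equality with $c_2=1$.

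The main obstacle is the pointwise algebraic inequality itself: identifying the correct extremal configuration and verifying that the constant $\min_{0<\tau<1/2}((1-\tau)^p-\tau^p+p\tau^{p-1})$ is admissible for all $(a,b,\alpha,\beta)$ (it need not be sharp as a two-variable inequality, only as the one-parameter reduction). The remaining steps---symmetrization, integration against $k_\epsilon$, and the $\epsilon\to 0$ limit---are direct refinements of the corresponding steps in Proposition~\ref{hardy}.
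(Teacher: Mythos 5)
Your architecture matches the paper's exactly: the paper proves this proposition by rerunning the proof of Proposition~\ref{hardy} verbatim, replacing the pointwise inequality~\eqref{eq:numbers} by its strengthened form~\eqref{eq:numbersimproved}. The two-variable inequality you write down is precisely~\eqref{eq:numbersimproved} in unfactored form: with $\alpha\geq\beta$, set $t=\beta/\alpha\in[0,1]$ and rescale the pair $(a,b)$ by $b$, and your inequality becomes $|a-t|^p-(1-t)^{p-1}(|a|^p-t)\geq c_p\,t^{p/2}|a-1|^p$, which is the statement of Lemma~\ref{numbers}. Your $p=2$ identity, the symmetrization against $k_\epsilon$, the collapse of the Euler--Lagrange term, and the $\epsilon\to0$ passage are all correct and are what the paper does.

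The one place your sketch goes wrong is the heuristic for the pointwise inequality itself, which you rightly call the main obstacle. After using the two homogeneities and the $(\alpha,a)\leftrightarrow(\beta,b)$ symmetry, you are left with a genuinely \emph{two}-parameter problem -- over $a$ and over $t=\beta/\alpha\in(0,1)$ -- not a one-parameter optimization. The paper's Lemma~\ref{numbers} is precisely the content that closes this: it studies $f(a,t)=\bigl(|a-t|^p-(1-t)^{p-1}(|a|^p-t)\bigr)/\bigl(t^{p/2}|a-1|^p\bigr)$, shows $\partial_a f$ and $\partial_t f$ cannot vanish simultaneously in the interior of $\{a<1,\,0<t<1\}$, handles the boundary cases, reduces the complex case to the real case by convexity in $\re(a-t)$, and identifies the infimum $c_p$ as attained only in the joint limit $a\to1$, $t\to1$ along $1-t=\tau(1-a)$. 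Also, the ``trace back'' to $|1+t|^p\geq1+pt+c_p|t|^p$ runs in the wrong direction: the paper's remark after Lemma~\ref{numbers} shows that~\eqref{eq:numbersimproved} \emph{implies} that convexity inequality by taking $a\to1$, $t\to1$ and expanding to order $\epsilon^p$; the one-variable inequality does not by itself yield the two-variable one. So the proposal is correct in outline and identifies the right obstacle, but the substitute it offers for Lemma~\ref{numbers} would not close it.
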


We shall prove Propositions \ref{hardy} and \ref{gsr} in Subsection \ref{sec:hardy} after having discussed a typical application and having explained their analogs involving derivatives.

In this paper we are mostly interested in the case where $k(x,y)=|x-y|^{-N-ps}$ which enters in \eqref{eq:main}. For this particular choice of the kernel and for $p=2$, ground state representation \eqref{eq:gsr} (with equality) was proved in \cite{FLS}. The results for general kernels $k$ seem to be new, even in the linear case $p=2$.

\begin{remark}
 In the proofs of Propositions \ref{hardy} and \ref{gsr} we will not use that the underlying space is $\R^N$ or that the measure is Lebesgue measure. Hence similar results hold, e.g., when $\R^N$ is replaced by a domain $\Omega$. Another case of interest is that of the Laplacian on a weighted graph, where $\R^N$ is replaced by a (discrete) graph $\Gamma$ and $dx$ by the counting measure on $\Gamma$ and $E[u]$ is replaced by $\sum_{i,j\in\Gamma} k(i,j) |u_i-u_j|^p$ for a sequence $(u_i)_{i\in\Gamma}$. Propositions \ref{hardy} and \ref{gsr} continue to hold in this situation after the obvious changes. In the special case $p=2$, $\Gamma=\Z^N$ and $k$ such that $k(i,j)=0$ if $|i-j|>1$, one recovers a formula for Jacobi matrices which was recently proved in \cite{FSW}.
\end{remark}

In the special case $p=2$ and $k(x,y) = |x-y|^{-N-2s}$, the
representation (\ref{eq:el}) with {\em non-negative} $V$ gives a
simple sufficient condition for $V$ to be a multiplier from $\dot
W^s_2(\R^N)$ to $\dot W^{-s}_2(\R^N)$. For $s=1/2$ and general, not
necessarily sign-definite $V$ this problem is addressed in \cite{MV}.


\subsection{Example}

A typical application of the ground state representation \eqref{eq:gsr} in mathematical physics concerns pseudo-relativistic Schr\"odinger operators $\sqrt{-\Delta +m^2} + V_0$ with a constant $m\geq 0$. Indeed, the kinetic energy can be put into the form considered in this section,
$$
\int \sqrt{|\xi|^2+m^2} |\hat u(\xi)|^2 \,d\xi = \iint |u(x)-u(y)|^2 k_m(|x-y|) \,dx\,dy
$$
where $\hat u(\xi)= (2\pi)^{-N/2} \int_{\R^N} e^{-i\xi\cdot x} u(x)\,dx$ is the Fourier transform of $u$ and
$$
k_m(r) = 
\begin{cases} 
 \left(\frac{m}{2\pi}\right)^{(N+1)/2} r^{-(N+1)/2} K_{(N+1)/2}(mr) & \textrm{if} \ m>0\,, \\
\pi^{-(N+1)/2} 2^{-1} \, \Gamma((N+1)/2) \, r^{-N-1} & \textrm{if} \ m=0\,,
\end{cases}
$$
with $K_\nu$ a Bessel function; see \cite[Sect. 7.11]{LL}.

More generally, one can consider non-negative functions $t$ and $k$ on $\R^N$ related by
\begin{equation}\label{eq:tk}
 t(\xi) = 4 \int_{\R^N} k(x) \sin^2(\xi\cdot x/2) \,dx
\end{equation}
and introduce the self-adjoint operator $T=t(D)$, $D=-i\nabla$, in $L_2(\R^N)$ with quadratic form
\begin{equation}
E[u] := \int_{\R^N} t(\xi) |\hat u(\xi)|^2 \,d\xi = \iint_{\R^N\times\R^N} |u(x)-u(y)|^2 k(x-y) \,dx\,dy \,.
\end{equation}
The last identity is a consequence of Plancherel's identity and \eqref{eq:tk}. We assume that $t$ is locally bounded and satisfies $t(\xi)\leq \const |\xi|^{2s}$ for some $0<s<1$ and all large $\xi$ and, similarly, that $k(x)$ is bounded away from the origin and satisfies $k(x)\leq \const |x|^{-N-2s}$ for all small $x$. Under these assumptions, $H^s(\R^N)=W^s_2(\R^N)$ is contained in the form domain of $T$ and we can consider the Schr\"odinger-type operator $T+V_0$ with a real-valued function $V_0\in L_{d/(2s)}(\R^N) + L_\infty(\R^N)$. Put $\lambda_0=\inf\spec\left(T+V_0\right)$ and assume that a positive function $\omega$ satisfies
$$
\left(T+V_0\right)\omega = \lambda_0 \omega
$$
in the sense of distributions. (Note that we do not require $\lambda_0$ to be an eigenvalue and $\omega$ an eigenfunction.) If $\omega$ is H\"older continuous with exponent $s$, then one easily verifies Assumption \ref{ass:el} and one obtains the ground state representation
\begin{equation}\label{eq:gsrschr}
\begin{split}
& \int_{\R^N} t(\xi) |\hat u(\xi)|^2 \,d\xi + \int_{\R^N} V_0(x) |u(x)|^2 \,dx - \lambda_0 \int_{\R^N} |u(x)|^2 \,dx \\
& \qquad = \iint |v(x)-v(y)|^2 \omega(x) k(x-y) \omega(y) \,dx\,dy
\end{split}
\end{equation}
for all $u$ in the form domain of $T$ and $v=\omega^{-1} u$.


\subsection{The local case}

Before proving Propositions \ref{hardy} and \ref{gsr} we would like to recall their `local' analogs. Since these facts are essentially well known we shall ignore some technical details. Let $g$ be a positive function on $\R^N$ and put
$$
\tilde E[u] := \int_{\R^N} g |\nabla u|^p \,dx
$$
(with the convention that this is infinite if $u$ does not have a distributional derivative or if this derivative is not in $L_p(\R^N,g)$). Moreover, assume that $\omega$ is a positive weak solution of the weighted $p$-Laplace equation
\begin{equation}\label{eq:elloc}
-\Div( g|\nabla \omega|^{p-2} \nabla \omega ) = V \omega^{p-1} \ . 
\end{equation}
We claim that for any $u$ with $\tilde E[u]$ and $\int V_+ |u|^p \,dx$ finite one has
\begin{equation}\label{eq:hardyloc}
 \tilde E[u] \geq \int_{\R^N} V(x) |u(x)|^p \,dx \ .
\end{equation}
This is clearly the analog of \eqref{eq:hardy}. To prove \eqref{eq:hardyloc} we write $u=\omega v$ and use the elementary convexity inequality
\begin{equation}\label{eq:convexloc}
 |a+b|^p \geq |a|^p + p |a|^{p-2} \re \overline{a}\cdot b
\end{equation}
for vectors $a,b\in\C^N$ and $p\geq 1$. This yields
\begin{align*}
 \tilde E[u] & = \int_{\R^N} g | v \nabla \omega + \omega \nabla v|^p \,dx \\
& \geq  \int_{\R^N} g |v|^p  |\nabla \omega |^p \,dx + p \int_{\R^N} g |\nabla \omega |^{p-2} \omega \re \overline{v} |v|^{p-2} \nabla v \cdot \nabla\omega \,dx \ .
\end{align*}
Recognizing the integrand in the last integral as $p^{-1} g \omega |\nabla \omega|^{p-2}\nabla\omega\cdot \nabla( |v|^p )$ and integrating by parts using \eqref{eq:elloc} we arrive at \eqref{eq:hardyloc}.

Next we show that for $p\geq 2$, \eqref{eq:hardyloc} can be improved to
\begin{equation}\label{eq:gsrloc}
 \tilde E[u] - \int_{\R^N} V(x) |u(x)|^p \,dx 
\geq c_p \int g \omega^p |\nabla v|^p \,dx =: c_p \, \tilde E_\omega[v] \ .
\end{equation}
for $u=\omega v$ with $\tilde E[u]$, $\int V_+ |u|^p \,dx$, and $\tilde E_\omega[v]$ finite. This follows by the same argument as before if one uses instead of \eqref{eq:convexloc} its improvement
\begin{equation}\label{eq:convexloc2}
 |a+b|^p \geq |a|^p + p |a|^{p-2} \re \overline{a} \cdot b + c_p |b|^p
\end{equation}
for $p\geq 2$. One can show that $c_p$ given in \eqref{eq:gsrconst} is the sharp constant in this inequality.

Since \eqref{eq:convexloc2} is an equality for $p=2$ and $c_2=1$, so is \eqref{eq:gsrloc}. This is the ground state representation which is familiar from the spectral theory of differential operators. In the case $p\geq 2$, \eqref{eq:gsrloc} can be used to derive remainder terms in Hardy's inequality on domains; see, e.g., \cite{BFT}.

\begin{remark}
In the case $g\equiv 1$, $N\neq p$, and with $\omega(x)=|x|^{-(N-p)/p}$ and $v(x) = |x|^{(N-p)/p} u(x)$, the local Hardy inequality with remainder term yields the following improvement of \eqref{eq:hardyclass},
\begin{equation}\label{eq:lochardyrem}
 \int_{\R^N} |\nabla u|^p \, dx \geq \left( \frac {|N-p|}p\right)^p \int_{\R^N} \frac{ |u(x)|^p}{|x|^p} \, dx + c_p \int_{\R^N} |\nabla v|^p \frac{dx}{|x|^{N-p}} \, .
\end{equation}
The constant $c_p$ in (\ref{eq:lochardyrem}) is sharp for any $p\geq 2$. For $N>p$, this can 
be shown by using a trial function of the form $u(x)=|x|^{-(N-p)/p+\alpha}$ for
$|x|\leq 1$ and $u(x)=|x|^{-(N-p)/p-\epsilon}$ for $|x|\geq 1$, letting $\epsilon \to 0$ and
choosing $\alpha = (N-p)/(p\tau)$ where $0<\tau<1/2$ is the minimizer in
(\ref{eq:gsrconst}). Similarly, for $N<p$, we choose $u(x)=|x|^{-(N-p)/p+\alpha}$ for
$|x|\geq 1$ and $u(x)=|x|^{-(N-p)/p+\epsilon}$ for $|x|\leq 1$.
\end{remark}


\subsection{Proof of Propositions \ref{hardy} and \ref{gsr}}\label{sec:hardy}

We shall need the elementary

\begin{lemma}\label{numbers}
Let $p\geq 1$. Then for all $0\leq t\leq 1$ and $a\in \C$ one has
\begin{equation}\label{eq:numbers}
 |a-t|^p \geq (1-t)^{p-1} (|a|^p-t) \ .
\end{equation}
For $p>1$ this inequality is strict unless $a=1$ or $t=0$. Moreover, if $p\geq 2$ then for all $0\leq t\leq 1$ and all $a\in\C$ one has
\begin{equation}\label{eq:numbersimproved}
 |a-t|^p \geq (1-t)^{p-1} \left(|a|^p-t\right) + c_p \, t^{p/2} \, |a-1|^p \ ,
\end{equation}
with $0<c_p\leq 1$ given by \eqref{eq:gsrconst}. For $p=2$, \eqref{eq:numbersimproved} is an equality with $c_2=1$. For $p>2$, \eqref{eq:numbersimproved} is a strict inequality unless $a=1$ or $t=0$.
\end{lemma}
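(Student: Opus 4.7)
The plan for \eqref{eq:numbers} is to reduce to the real nonnegative case and apply convexity. Since $|a-t|^{2} - (|a|-t)^{2} = 2t(|a| - \re a) \geq 0$, we have $|a-t|^{p} \geq ||a|-t|^{p}$, which suffices because the right-hand side of \eqref{eq:numbers} depends on $a$ only through $|a|$. For $r := |a| \geq 0$: if $r \leq t$ then $r^{p} \leq t^{p} \leq t$, so the right-hand side is nonpositive and the inequality is trivial. If $r \geq t$, writing $r = (1-t)\cdot\tfrac{r-t}{1-t} + t\cdot 1$ as a convex combination and invoking convexity of $x \mapsto x^{p}$ on $[0,\infty)$ gives $r^{p} \leq (1-t)\bigl(\tfrac{r-t}{1-t}\bigr)^{p} + t$, which rearranges to the desired inequality. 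For $p > 1$, strict convexity combined with the reduction yields equality iff $a = 1$ or $t = 0$.

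For \eqref{eq:numbersimproved} with $p = 2$, direct computation gives the identity $(1-t)(|a|^{2}-t) + t|a-1|^{2} = |a|^{2} - 2t\re a + t^{2} = |a-t|^{2}$, so equality holds with $c_{2}=1$. Rewriting this as $|a-t|^{2} = t|a-1|^{2} + (1-t)(|a|^{2}-t)$ furnishes the key decomposition for general $p$.

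For $p > 2$, set $w := t|a-1|^{2}$ and $\beta := (1-t)(|a|^{2}-t)$, so that $|a-t|^{2} = w+\beta$; then \eqref{eq:numbersimproved} becomes $f(w) \geq (1-t)^{p-1}(|a|^{p}-t)$ with $f(w) := (w+\beta)^{p/2} - c_{p}w^{p/2}$. For fixed $r = |a|$, $\beta$ is fixed while $w$ ranges over $[t|r-1|^{2}, t(r+1)^{2}]$ as $\arg a$ varies, the endpoints corresponding to $a = r$ and $a = -r$. When $\beta \geq 0$, one computes $f'(w) = (p/2)[(w+\beta)^{p/2-1} - c_{p}w^{p/2-1}] \geq (p/2)(1-c_{p})w^{p/2-1} \geq 0$ using $c_{p} \leq 1$ and $p/2 - 1 > 0$, so $f$ is minimized on the admissible range at $w = t|r-1|^{2}$ (i.e.\ at $a = r$ real) and \eqref{eq:numbersimproved} reduces to its real-line version for $a = r$. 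The real-line inequality $|r-t|^{p} - (1-t)^{p-1}(r^{p}-t) \geq c_{p}t^{p/2}|r-1|^{p}$ I would then prove by direct calculus, the constant $c_{p}$ in \eqref{eq:gsrconst} arising as the infimum of the relevant normalized gap, with the optimization over $\tau \in (0, 1/2)$ parametrizing the extremal configurations. The remaining obstacle is the regime $\beta < 0$ (equivalently $|a|^{2} < t$) in the complex case, where $f$ has an interior minimum at $w^{*} = \beta/(\kappa-1)$ with $\kappa := c_{p}^{2/(p-2)} \leq 1$; closing the argument here by explicit evaluation of $f(w^{*})$ and invoking the defining property of $c_{p}$ is what I expect to be the main delicate point of the proof, with strictness for $p > 2$ away from $a = 1$ or $t = 0$ following by tracking strict inequalities through the convexity and monotonicity steps.
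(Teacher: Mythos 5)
Your proof of \eqref{eq:numbers} is correct and in fact cleaner than the paper's: the reduction $|a-t|^p\geq \bigl||a|-t\bigr|^p$ followed by the convex-combination argument $r=(1-t)\cdot\tfrac{r-t}{1-t}+t\cdot 1$ is more transparent than the paper's derivative computation on $(a-t)^p/(a^p-t)$, and it also yields the strictness statement for $p>1$ in a clean way. The $p=2$ identity $(1-t)(|a|^2-t)+t|a-1|^2=|a-t|^2$ is correct and is implicitly the same as what the paper uses.

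For the case $p>2$, however, there are two genuine gaps. First, the heart of Lemma~\ref{numbers} is the real inequality $|r-t|^p-(1-t)^{p-1}(r^p-t)\geq c_p\,t^{p/2}|r-1|^p$, and you do not actually prove it: the sentence ``I would then prove by direct calculus, the constant $c_p$ arising as the infimum of the relevant normalized gap'' is a description of the desired conclusion, not an argument. This is where the real work lies; the paper devotes most of its proof to showing, via a careful study of $\partial f/\partial a$ and $\partial f/\partial t$ and of all the boundary limits, that the infimum of $f(a,t)=\bigl(|a-t|^p-(1-t)^{p-1}(|a|^p-t)\bigr)/\bigl(t^{p/2}|a-1|^p\bigr)$ over $a\in\R$, $0<t<1$ is exactly the $c_p$ of \eqref{eq:gsrconst}, attained only in the degenerate limit $a\to1$, $t\to1$ with $1-t=\tau(1-a)$. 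Second, your reduction from complex $a$ to real $a$ (fixing $r=|a|$ and varying $w=t|a-1|^2$) only works when $\beta=(1-t)(|a|^2-t)\geq0$, i.e.\ $|a|\geq\sqrt t$, since only then is $f'(w)\geq0$. In the regime $|a|<\sqrt t$ you acknowledge an unresolved interior minimum; it is not a ``delicate point'' left over but an actual hole in the reduction, and it is not obvious how to patch it within your parametrization. The paper avoids this issue by a different reduction: it fixes $\beta=|a-t|$ (not $|a|$) and varies $x=\re(a-t)\in(-\beta,\beta)$, rewriting \eqref{eq:numbersimproved} as
$(1-t)^{p-1}(\beta^2+2tx+t^2)^{p/2}+c_p t^{p/2}(\beta^2-2(1-t)x+(1-t)^2)^{p/2}\leq\beta^p+(1-t)^{p-1}t$,
whose left side is convex in $x$ and so is maximized at $x=\pm\beta$, i.e.\ for real $a$; this covers all complex $a$ in one stroke. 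You should adopt this convexity-in-$\re a$ reduction, and you still need to supply the two-variable minimization proving the real case.
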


\begin{remark}
 The fact that in \eqref{eq:numbersimproved} the same constant $c_p$ as in \eqref{eq:convexloc2} appears is not a coincidence. Indeed, putting $a=1+\epsilon \tilde a$ and $t=1-\epsilon \tilde b$ for some $\tilde a\in\C$ and $\tilde b>0$ and expanding (\ref{eq:numbersimproved}) up to order $\epsilon^p$ we recover inequality \eqref{eq:convexloc2} with vectors $a$, $b$ replaced by numbers $\tilde a$,~$\tilde b$.
\end{remark}

\begin{proof}
  To prove the first assertion note that for fixed $|a|$ the minimum
  of the left side is clearly achieved for $a$ real and positive. Since
  for $|a|^p<t$ the inequality is trivial, one may thus assume that
  $a\geq t^{1/p}$. The assertion then follows from the fact that the
  derivative with respect to $a$ of $(a-t)^p/(a^p-t)$ vanishes only at
  $a=1$.

To prove the second assertion, we may assume that $p>2$, since (\ref{eq:numbersimproved}) is an equality if $p=2$. We first prove the assertion for real $a$. The function
$$ 
f(a,t) := \frac{|a-t|^p - (1-t)^{p-1} (|a|^p-t) } { t^{p/2}|a-1|^p}\,.
$$
diverges at $a=1$, and its partial derivative with respect to $a$ is given by
$$
 \frac{\partial f}{\partial a}(a,t) = \frac{p(1-t)^{p-2}}{t^{p/2} (a-1) |a-1|^p} \left( \frac{|a-t|^{p-2}(t-a)}{(1-t)^{p-1}} + \frac{ |a|^{p-2}a-t}{1-t} \right) \ .
$$
For $a> 1> t$ this is negative,  as follows from the first assertion with $p$ replaced by $p-1$. 
Hence for all $a >1$,
\begin{equation*}\label{eq:largepproof}
f(a,t) \geq f(+\infty,t)= t^{-p/2}\left( 1-(1-t)^{p-1} \right) \ .
\end{equation*}
An elementary calculation shows that the latter function is decreasing for $t\in(0,1)$. This proves that $f(a,t)\geq 1$ for $a > 1$.

Next, we claim that $f$ does not attain its minimum in the interior of the region $\{(a,t):\ -\infty <a<1,\  0<t<1\}$. To see this, we write the partial derivative of $f$ with respect to $t$ as 
\begin{align}\nonumber
\frac{\partial f}{\partial t}(a,t) &= \frac{(1-t)^{p-1}}{2 t^{(p+2)/2} |a-1|^p} \biggl( p(t+a) \left(\frac{|a-t|^{p-2}(t-a)}{(1-t)^{p-1}} + \frac{ |a|^{p-2}a-t}{1-t}\right) \\ \nonumber & \qquad\qquad\qquad\qquad  + \frac t{1-t} \left( (|a|^p-1)(p-2) -ap(|a|^{p-2}-1)\right)\biggl) \ . 
\end{align}
The first line vanishes in case $\partial f/\partial a=0$. Moreover, it is easy to see that the second line is non-zero for $a\in (-\infty,1)\setminus\{-1\}$. In fact, it is positive if $a\in(-\infty,-1)$ and negative if $a\in(-1,0]$. If $0<a<1$, it is negative in view of
$$
\frac{ a^p -1}{a^{p-1}-a} > \frac{p}{p-2} \ .
$$
(The latter inequality holds since the left side is strictly monotone decreasing.) To treat $a=-1$ one checks that $\partial f/\partial a\,(-1,t)\neq0$ for $0<t<1$. This proves that $f$ does not attain its minimum in the interior of the region $\{(a,t):\ -\infty <a<1,\, 0<t<1\}$.

Now we examine $f$ on the boundary of that region. Similarly as above, we have $\lim_{a\to -\infty} f(a,t) \geq 1$ uniformly in $t\in (0,1)$. Moreover, $\lim_{t\to 0} f(a,t)=+\infty$ uniformly in $a<1$, and $\lim_{t\to 1} f(a,t) =1$ uniformly in $a\leq 1-\epsilon$ for all $\epsilon>0$. Finally, $\lim_{a\to 1} f(a,t)= +\infty$ uniformly in $t\in (0,1-\epsilon)$ for all $0<\epsilon<1$. Thus it remains to study the limit $a\to 1$ and $t\to 1$. 
For given $\tau>0$ we let $a\to 1$ and $t\to1$ simultaneously with $1-t=\tau(1-a)$ and find
$$
\lim_{a\to 1} f(a,1-\tau(1-a))= |1-\tau|^p -\tau^p + p \tau^{p-1} \geq c_p\, .
$$
The last inequality follows from the definition of $c_p$ and the fact that the minimum over $\tau$ is attained for $\tau\in (0,1/2)$. This proves that $f(a,t)> c_p$ for all $a\in \R \setminus \{1\}$ and $0<t<1$.

Finally, we assume that $a$ is an arbitrary complex number. We write $a-t=x+iy$ with $x$ and $y$ real and put $\beta:=|a-t|$. What we want to prove is that for all $\beta\geq 0$ and $x\in(-\beta,\beta)$ one has
$$
(1-t)^{p-1} (\beta^2+2tx+t^2)^{p/2} + c_p t^{p/2} \left(\beta^2 -2(1-t)x +(1-t)^2 \right)^{p/2}
\leq \beta^p + (1-t)^{p-1} t \ .
$$
But for fixed $\beta$, the left side is a convex function of $x$ in the interval $(-\beta,\beta)$, so its maximum will be attained either at $x=\beta$ or $x=-\beta$, that is, for real values of $a-t$. This reduces the assertion in the complex case to the real case and completes the proof.
\end{proof}

We now turn to the

\begin{proof}[Proof of Proposition \ref{hardy}]
We may assume that $\int V_-|u|^p \,dx<\infty$, for otherwise there is nothing to prove. Replacing $u$ by $u\min\{1,M |u|^{-1}\}$ and letting $M\to\infty$ using monotone convergence, we may assume that $u$ is bounded. Recall also that $u$ is assumed to have compact support.

We write $u=\omega v$, multiply \eqref{eq:elreg} by $|v(x)|^p \omega(x)^p$ and integrate with respect to $x$. After symmetrizing with respect to $x$ and $y$ (recall that $k_\epsilon(x,y)=k_\epsilon(y,x)$) we obtain
\begin{align*}
 & \iint_{\R^N\times\R^N} \left(|v(x)|^p \omega(x) - |v(y)|^p \omega(y) \right) \left(\omega(x)-\omega(y)\right) \left|\omega(x)-\omega(y)\right|^{p-2} k_\epsilon(x,y) \,dx\,dy \notag \\
 & \qquad = \int_{\R^N} V_\epsilon(x) |u(x)|^p \,dx \ .
\end{align*}
We write this as
\begin{align}\label{eq:hardyreg}
\iint_{\R^N\times\R^N} \!\!\Phi_u(x,y) k_\epsilon(x,y) \,dx\,dy + \int_{\R^N} V_\epsilon |u|^p \,dx 
= \iint_{\R^N\times\R^N} \!\!|u(x)-u(y)|^p k_\epsilon(x,y) \,dx\,dy
\end{align}
where
\begin{align}\label{eq:phirem}
 \Phi_u(x,y) := & |\omega(x)v(x)-\omega(y)v(y)|^p  \notag\\
& - \left(\omega(x)|v(x)|^p - \omega(y) |v(y)|^p \right) 
\left(\omega(x)-\omega(y)\right) \left|\omega(x)-\omega(y)\right|^{p-2} \ .
\end{align}
We claim that $\Phi_u\geq 0$ pointwise. To see this, we may by symmetry assume that $\omega(x)\geq \omega(y)$. Putting $t=\omega(y)/\omega(x)$, $a= v(x)/v(y)$ and applying \eqref{eq:numbers} we deduce that $\Phi_u\geq 0$.

Now we pass to the limit $\epsilon\to 0$ in \eqref{eq:hardyreg}. Since $|u|^p$ is bounded with compact support and $V_\epsilon\to V$ weakly in $L_{1,\loc}$, the integral containing $V_\epsilon$ converges. The other two terms converge by dominated convergence since $0\leq k_\epsilon\leq k$, and we obtain
\begin{align}\label{eq:hardyequal}
\iint_{\R^N\times\R^N} \!\!\Phi_u(x,y) k(x,y) \,dx\,dy + \int_{\R^N} V |u|^p \,dx = E[u].
\end{align}
This implies the assertion since $\Phi_u\geq 0$.
\end{proof}

\begin{proof}[Proof of Proposition \ref{gsr}]
The proof is similar to that of Proposition \ref{hardy}, using \eqref{eq:numbersimproved} instead of \eqref{eq:numbers}. We omit the details.
\end{proof}

\begin{remark}\label{localass}
 Below we shall need a slight refinement of Propositions \ref{hardy} and \ref{gsr}. If in Assumption \ref{ass:el} the statement `$V_\epsilon\to V$ weakly in $L_{1,\loc}(\R^N)$' is replaced by the statement `$V_\epsilon\to V$ weakly in $L_{1,\loc}(\Omega)$ for an open set $\Omega\subset\R^N$', then \eqref{eq:hardy} and \eqref{eq:gsr} remain valid for $u$ with $\supp u \subset\Omega$. This is really what we have shown in the above proof.
\end{remark}


\section{Proof of the sharp Hardy inequality}
\label{sec:proof}

Throughout this section we fix $N\geq 1$, $0<s<1$ and $p\neq N/s$ and abbreviate
$$
\alpha:=(N-ps)/p \, .
$$
We will deduce the sharp Hardy inequality \eqref{eq:main} using the general approach in the previous section with the choice
\begin{equation}\label{eq:abbreviate}
 \omega(x)= |x|^{-\alpha}\, ,
\quad
k(x,y) = |x-y|^{-N-ps}\, ,
\quad
V(x) = \mathcal C_{N,s,p} |x|^{-ps} \, .
\end{equation}


\subsection{The Euler-Lagrange equation}

We begin the proof of Theorem \ref{main} by verifying that $\omega$ solves the Euler-Lagrange equation associated with \eqref{eq:main}.

\begin{lemma}\label{eleq}
 One has uniformly for $x$ from compacts in $\R^N\setminus\{0\}$
\begin{equation}\label{eq:eleq}
 2 \lim_{\epsilon\to 0} \int_{\left|\left|x\right|-\left|y\right|\right|>\epsilon }
\left(\omega(x) -\omega(y)\right)
\left|\omega(x) - \omega(y) \right|^{p-2} k(x,y)\,dy
= \frac{\mathcal C_{N,s,p}}{|x|^{ps}} \ \omega(x)^{p-1}
\end{equation}
with $\mathcal C_{N,s,p}$ from \eqref{eq:mainconst}.
\end{lemma}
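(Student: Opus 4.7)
The computation is scale- and rotation-equivariant, so my first step would be to reduce to the case $|x|=1$. For $x\neq 0$, the rotational symmetry of $\omega$ and $k$ allows one to assume $x=|x|e_1$; substituting $y=|x|z$ then yields $\omega(|x|z)=|x|^{-\alpha}\omega(z)$, $k(x,|x|z)=|x|^{-N-ps}|e_1-z|^{-N-ps}$, $dy=|x|^N dz$, and the shell $||x|-|y||>\epsilon$ becomes $|1-|z||>\delta$ with $\delta=\epsilon/|x|$. Collecting $|x|$-powers, the factor $|x|^{-\alpha(p-1)-ps}=|x|^{\alpha-N}$ matches $|x|^{-ps}\omega(x)^{p-1}$ exactly, and since $\delta$ depends on $x$ only through $|x|$, uniform convergence on compacts in $\R^N\sm$ reduces to the $x$-free statement
\begin{equation*}
\lim_{\delta\to 0^+}\int_{|1-|z||>\delta}(1-|z|^{-\alpha})|1-|z|^{-\alpha}|^{p-2}|e_1-z|^{-N-ps}\,dz=\tfrac12\mathcal C_{N,s,p}.
\end{equation*}

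Next I would separate radial and angular variables. Writing $z=r\theta$ with $\theta\in\Sph^{N-1}$ and $t=\theta\cdot e_1$, so that $|e_1-r\theta|^2=1-2rt+r^2$ and $d\sigma(\theta)=|\Sph^{N-2}|(1-t^2)^{(N-3)/2}\,dt$ for $N\geq 2$ (the two points $\theta=\pm 1$ for $N=1$), the angular integration produces $\Phi_{N,s,p}(r)$ by the very definition \eqref{eq:mainphi}. The claim then becomes
\begin{equation*}
\lim_{\delta\to 0^+}\int_{|1-r|>\delta}(1-r^{-\alpha})|1-r^{-\alpha}|^{p-2}\Phi_{N,s,p}(r)\,r^{N-1}\,dr=\tfrac12\mathcal C_{N,s,p}.
\end{equation*}
I would split at $r=1$ and apply the inversion $r=1/\rho$ on $\{r>1+\delta\}$. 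Two inputs make everything collapse: the reflection identity $\Phi_{N,s,p}(1/\rho)=\rho^{N+ps}\Phi_{N,s,p}(\rho)$, immediate from \eqref{eq:mainphi} by pulling out $\rho^{N+ps}$ from the kernel, and the elementary factoring $(1-r^{-\alpha})|1-r^{-\alpha}|^{p-2}=-r^{-\alpha(p-1)}(1-r^\alpha)|1-r^\alpha|^{p-2}$ on $(0,1)$, together with $N-1-\alpha(p-1)=ps+\alpha-1$. After these substitutions the two pieces live on $\rho$-intervals ending at $1-\delta$ and $1/(1+\delta)$ respectively, and on $(0,1-\delta)$ their integrands combine to $|1-r^\alpha|^p\,r^{ps-1}\Phi_{N,s,p}(r)$, whose integral on $(0,1)$ is $\mathcal C_{N,s,p}/2$ by \eqref{eq:mainconst}.

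The main technical obstacle is the principal-value justification. Near $r=1$ the original integrand behaves like $|1-r|^{p-2-ps}\sgn(r-1)$, which is not absolutely integrable for small $p$; but the antisymmetry under $r\mapsto 1/r$, together with the reflection identity for $\Phi_{N,s,p}$, produces the cancellation that makes the recombined integrand $|1-r^\alpha|^p r^{ps-1}\Phi_{N,s,p}(r)\sim|1-r|^{p-1-ps}$ absolutely integrable since $s<1$. One additionally needs to dispose of the mismatch between the cutoffs $1-\delta$ and $1/(1+\delta)$: the interval between them has length $O(\delta^2)$ and the integrand size $O(\delta^{p-2-ps})$ there, so the leftover contribution is $O(\delta^{p(1-s)})\to 0$ as $\delta\to 0$, uniformly in $|x|$ on compacts.
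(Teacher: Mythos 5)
Your proposal follows the same route the paper takes: reduce by scaling to $|x|=1$, integrate out the angles to produce $\Phi_{N,s,p}$, and exploit the inversion $\rho\mapsto 1/\rho$ together with the reflection identity $\Phi_{N,s,p}(1/\rho)=\rho^{N+ps}\Phi_{N,s,p}(\rho)$ to fold the two half-lines into the single convergent integral defining $\mathcal C_{N,s,p}$. The only real difference is organizational: the paper first proves that the principal value converges (by showing the reduced radial integrand $\phi(\rho,r)$ is Lipschitz at $\rho=r$) and afterwards performs the change of variables to identify the limit, whereas you change variables immediately and then control the residual interval $(1-\delta,1/(1+\delta))$ by an order-of-magnitude bound. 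Your version is a little leaner, since it needs only a one-sided bound on $\Phi$ near $r=1$ rather than the derivative estimate the paper's Lipschitz argument requires.

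The one genuine gap is that this bound is asserted rather than proved. Both the claim that the original integrand ``behaves like $|1-r|^{p-2-ps}$'' and the claim that the recombined integrand is ``$\sim|1-r|^{p-1-ps}$'' rely on knowing that $\Phi_{N,s,p}(r)\asymp(1-r)^{-1-ps}$ as $r\to 1^-$. For $N=1$ this is read off the formula, but for $N\geq 2$ the angular integral in \eqref{eq:mainphi} has a non-obvious boundary singularity, and establishing its precise blow-up rate is where the paper's proof actually invests its effort (via the hypergeometric identity from \cite[(3.665)]{GR} and the behavior of $(1-z)^{a+b-c}F(a,b,c;z)$ near $z=1$). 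Without some version of that estimate your proof does not close: the dominated-convergence step for $\int_0^{1/(1+\delta)}|1-r^\alpha|^p r^{ps-1}\Phi(r)\,dr$ and the $O(\delta^{p(1-s)})$ bound on the leftover both hinge on it. You should either cite the hypergeometric asymptotics as the paper does or supply an elementary direct estimate of the angular integral near $t=1$.
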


\begin{proof}
First note that it suffices to prove the convergence \eqref{eq:eleq} for a fixed $x\in\R^N\setminus\{0\}$, since the uniformity will then follow by a simple scaling argument. Now the integral on the left side of \eqref{eq:eleq} is absolutely convergent for any $\epsilon>0$ and after integrating out the angles it can be written as
\begin{equation}\label{eq:el1d}
 r^{-N+1} \int_{|\rho-r|>\epsilon} \frac{\sgn(\rho^\alpha-r^\alpha)}{|\rho-r|^{2-p(1-s)}} \, \phi(\rho,r) \,d\rho
\end{equation}
where $r=|x|$,
\begin{equation}\label{eq:phi}
 \phi(\rho,r)= \left|\frac{\rho^{-\alpha}-r^{-\alpha}}{r-\rho}\right|^{p-1} \times
\begin{cases}
\rho^{N-1} \left(1-\frac\rho r\right)^{1+ps} \Phi(\frac\rho r) , & \text{if}\ \rho<r,\\
r^{N-1} \left(1-\frac r\rho\right)^{1+ps} \Phi(\frac r\rho) , & \text{if}\ \rho>r,
\end{cases}
\end{equation}
and $\Phi=\Phi_{N,s,p}$ given in \eqref{eq:mainphi}. Since $p(1-s)>0$, the convergence of the integral in \eqref{eq:el1d} for $\epsilon\to 0$ will follow if we can show that $\phi(\rho,r)$ is Lipschitz continuous as a function of $\rho$ at $\rho=r$. For this we only need to prove that $(1-t)^{1+ps}\Phi(t)$ and its derivative remain bounded as $t\to 1-$.

For $N=1$ this is obvious and hence we restrict ourselves to the case $N\geq 2$ in the following. One can prove the desired property either directly using elementary estimates or, as we shall do here, deduce it from properties of special functions. According to \cite[(3.665)]{GR}
\begin{equation*}
 \Phi(t) = |\Sph^{N-2}| \ B(\tfrac{N-1}2,\tfrac12) \ F(\tfrac{N+ps}2,\tfrac{ps+2}2,\tfrac N2;t^2)
\end{equation*}
where $F(a,b,c;z)$ is a hypergeometric function. If $a+b-c>1$ then both $(1-z)^{a+b-c} F(a,b,c;z)$ and its derivative
\begin{equation*}
\frac d{dz} \left( (1-z)^{a+b-c} F(a,b,c;z) \right) = \frac{(c-a)(c-b)}c (1-z)^{a+b-c-1} F(a,b,c+1;z)
\end{equation*}
have a limit as $z\to 1-$; see \cite[Sec. 6.2.1, 6.8]{Lu}. Since $a+b-c=1+ps>1$ in our situation, one easily deduces that $(1-t)^{1+ps}\Phi(t)$ and its derivative have a limit as $t\to 1-$.

This argument gives \eqref{eq:eleq} with $\mathcal C_{N,s,p}$ replaced by the constant
\begin{equation}\label{eq:constant1d}
\mathcal C_{N,s,p}' :=
2 \lim_{\epsilon\to 0} \int_{|\rho-1|>\epsilon} \frac{\sgn(\rho^\alpha-1)}{|\rho-1|^{2-p(1-s)}} \, \phi(\rho,1) \,d\rho \, .
\end{equation}
To see that this constant coincides with \eqref{eq:mainconst}, we change variables $\rho\mapsto \rho^{-1}$ in the integral on $(1+\epsilon,\infty)$. Recalling the properties of $\phi$ we can pass to the limit $\epsilon\to0$ and obtain
\begin{align*}
\mathcal C_{N,s,p}' 
& = 2 (\sgn\alpha) \int_0^1 \left(\rho^{-p(1-s)} \phi(\rho^{-1},1) - \phi(\rho,1) \right) \frac{d\rho}{(1-\rho)^{2-p(1-s)}} \\
& = 2 \int_0^1 \rho^{ps-1} \left|1 - \rho^\alpha \right|^p \Phi(\rho) \, d\rho\,,
\end{align*}
which is \eqref{eq:mainconst} for $N\geq 2$. The proof for $N=1$ is similar.
\end{proof}

\begin{remark}
It is possible to express the sharp Hardy constant as an $N$-dimensional double integral
\begin{equation} \label{eq:constantn}
\mathcal C_{N,s,p} = \frac{|N-ps|}{p} \frac{2}{|\Sph^{N-1}|}
\iint_{\{|x|<1<|y|\}} \left| |x|^{-\tfrac{N-ps}p} - |y|^{-\tfrac{N-ps}p} \right|^{p-1} \frac{dx\,dy}{|x-y|^{N+ps}} \,.
\end{equation}
To see this in the case $N>ps$, we multiply the integral in \eqref{eq:eleq} by $\chi_B(x)$, the characteristic function of the unit ball $B\subset\R^N$, and integrate with respect to $x$. After symmetrizing with respect to the variables $x$, $y$ and passing to the limit $\epsilon\to 0$, we find
\begin{align*}
& \iint_{\R^N\times\R^N}
(\chi_B(x)-\chi_B(y)) \left(\omega(x) -\omega(y)\right)
\left|\omega(x) - \omega(y) \right|^{p-2} k(x,y) \,dx\,dy \\
& \qquad = \mathcal C_{N,s,p} \int_B \frac{\omega(x)^{p-1}}{|x|^{ps}} \, dx \,.
\end{align*}
Performing the integration on the right side yields \eqref{eq:constantn} for $N>ps$. In the case $N<ps$, we multiply \eqref{eq:eleq} by $1-\chi_B(x)$ and proceed similarly. 
\end{remark}


\subsection{Proof of the Hardy inequality}

We apply the general approach in Section \ref{sec:general} with $k$, $\omega$, $V$ as in \eqref{eq:abbreviate} and
\begin{equation}
 k_\epsilon(x,y) :=
\begin{cases}
 k(x,y) & \text{if} \ \left||x|-|y|\right|>\epsilon\ ,\\
0 & \text{if} \ \left||x|-|y|\right|\leq \epsilon \ .
\end{cases}
\end{equation}
For simplicity, let ${\mathcal Q}=\dot W^s_p(\R^N)$ if $N>ps$, and ${\mathcal Q}=\dot W^s_p(\R^N\sm)$ if $N<ps$. 
In Lemma \ref{eleq} we have verified that the modification of Assumption \ref{ass:el} mentioned in Remark \ref{localass} is satisfied for $\Omega=\R^N\setminus\{0\}$. Inequalities \eqref{eq:main} and \eqref{eq:remainder} for $u\in C_0^\infty(\R^N\setminus\{0\})$ are an immediate consequence of Propositions \ref{hardy} and \ref{gsr}. By density they extend to the homogeneous Sobolev space ${\mathcal Q}$.

Next we shall prove that for $p>1$, inequality \eqref{eq:main} is
strict for all $0\not\equiv u\in{\mathcal Q}$. (Note that for
$p\geq 2$ this is an immediate consequence of \eqref{eq:remainder}.)
We start from identity \eqref{eq:hardyequal} which was proved for bounded functions $u$ with compact support in
$\R^N\setminus\{0\}$ and with $\int V|u|^p\,dx$ and $\iint |u(x)-u(y)|^p
k(x,y) \,dx\,dy$ finite. By a standard approximation argument, this
identity extends to any $u\in {\mathcal Q}$ with
$\Phi_u$ given by \eqref{eq:phirem} and $u=\omega v$. 

Assume that \eqref{eq:hardy} holds with equality for some $u\in {\mathcal Q}$, and hence also for $|u|$. Since $\Phi_{|u|}$ is non-negative and $k$ is strictly positive, it follows from \eqref{eq:hardyequal} that $\Phi_{|u|}\equiv 0$. Since $p>1$ this implies that $\omega(x)^{-p}|u(x)|^p$ is a constant (see Lemma \ref{numbers}), whence $u\equiv 0$. This proves that inequality \eqref{eq:hardy} is strict for any $0\not\equiv u\in {\mathcal Q}$ if $p>1$.


\subsection{Sharpness of the constant}

To prove that the constant $\mathcal C_{N,s,p}$ in \eqref{eq:hardy} is optimal we first assume that $N>ps$ and use a family of trial functions $u_n\in\dot W^s_p(\R^N)$ which approximate the `virtual ground state' $\omega$. For any integer $n\in\N$ we divide $\R^N$ into three regions,
\begin{align*}
 I & := \{ x\in\R^N : \ 0\leq|x|< 1 \} \ , \\
M_n & := \{ x\in\R^N : \ 1\leq|x|< n \} \ , \\
O_n & := \{ x\in\R^N : \ |x|\geq n \} \ ,
\end{align*}
and define the functions
\begin{equation*}
 u_n(x) := 
\begin{cases}
 1 - n^{-\alpha} & \text{if } x\in I \ , \\
|x|^{-\alpha} - n^{-\alpha} & \text{if } x\in M_n \ , \\
0 & \text{if } x\in O_n \ .
\end{cases}
\end{equation*}
These functions belong to $W^1_p(\R^N)$ and hence also to $\dot{W}^s_p(\R^N)$. Similarly as in the proof of Proposition \ref{hardy} we integrate the right side of \eqref{eq:eleq} against $u_n(x)$ and symmetrize with respect to the variables. One easily shows that in the limit $\epsilon\to 0$ one obtains
\begin{align}\label{eq:trialeq}
 & \iint_{\R^N\times\R^N} (u_n(x) - u_n(y)) (\omega(x) - \omega(y)) | \omega(x) - \omega(y)|^{p-2} k(x,y)\,dx\,dy \notag \\
& \qquad = \mathcal C_{N,s,p} \int_{\R^N} \frac{u_n(x)\omega(x)^{p-1}}{|x|^{ps}} \,dx \ .
\end{align}
Here we use the same abbreviations as in \eqref{eq:abbreviate}. The left side of \eqref{eq:trialeq} can be rewritten as
\begin{equation*}
 \iint_{\R^N\times\R^N} |u_n(x) - u_n(y)|^p k(x,y)\,dx\,dy + 2 \, \mathcal R_0
\end{equation*}
with
\begin{align*}
 \mathcal R_0 := & \iint_{x\in I,\, y\in M_n} (1 - \omega(y))
\left( (\omega(x) - \omega(y))^{p-1} - (1 -\omega(y))^{p-1} \right) k(x,y) \,dx\,dy \\
& + \iint_{x\in M_n,\, y\in O_n} (\omega(x) - n^{-\alpha})
\left( (\omega(x) - \omega(y))^{p-1} - (\omega(x) - n^{-\alpha})^{p-1} \right) k(x,y) \,dx\,dy \\
& + \iint_{x\in I,\, y\in O_n} (1 - n^{-\alpha})
\left( (\omega(x) - \omega(y))^{p-1} - (1 - n^{-\alpha})^{p-1} \right) k(x,y) \,dx\,dy \ .
\end{align*}
It follows from the explicit form of $\omega(x)$ that the integrands in all three integrals are pointwise non-negative, hence
\begin{equation}\label{eq:r0}
 \mathcal R_0 \geq 0 \ .
\end{equation}
On the other hand, the right side of \eqref{eq:trialeq} divided by $\mathcal C_{N,s,p}$ can be rewritten as
\begin{equation*}
\int_{\R^N} \frac{u_n^p}{|x|^{ps}} \,dx + \mathcal R_1 + \mathcal R_2
\end{equation*}
with
\begin{align*}
 \mathcal R_1 & := \int_{I} \left(1 - n^{-\alpha}\right) \left( \omega(x)^{p-1} - \left(1 - n^{-\alpha}\right)^{p-1} \right) \frac{dx}{|x|^{ps}} \ , \\
\mathcal R_2 & := \int_{M_n} \left(\omega(x) - n^{-\alpha}\right) \left( \omega(x)^{p-1} - \left(\omega(x) - n^{-\alpha}\right)^{p-1} \right) \frac{dx}{|x|^{ps}} \ .
\end{align*}
Again both terms are non-negative and we shall show below that
\begin{equation}\label{eq:r12}
 \mathcal R_1 + \mathcal R_2 = \mathcal O(1)
\qquad \text{as } n\to\infty \ .
\end{equation}
Since obviously $\int u_n^p |x|^{-ps}\,dx\to\infty$ as $n\to\infty$ we conclude from \eqref{eq:r0} and \eqref{eq:r12} that
\begin{align*}
 & \frac{\iint_{\R^N\times\R^N} |u_n(x) - u_n(y)|^p k(x,y)\,dx\,dy}{\int_{\R^N} |u_n(x)|^p |x|^{-ps} \,dx} \\
& \qquad = \mathcal C_{N,s,p} \left(1 + \frac{\mathcal R_1 + \mathcal R_2}{ \int_{\R^N} |u_n(x)|^p |x|^{-ps} \,dx } \right) - \frac{2\mathcal R_0}{ \int_{\R^N} |u_n(x)|^p |x|^{-ps} \,dx } \\
& \qquad \leq \mathcal C_{N,s,p} \left(1 + o(1) \right)
\end{align*}
as $n\to\infty$. This shows that $\mathcal C_{N,s,p}$ is sharp.

It remains to prove \eqref{eq:r12}. Since the integrand in $\mathcal R_1$ is pointwise bounded by $ \omega(x)^{p-1}|x|^{-ps}=|x|^{\alpha-N}$ we find that
$\mathcal R_1 \leq \int_{|x|<1} |x|^{\alpha-N} dx< \infty$.  
To estimate $\mathcal R_2$ we use that $1-(1-t)^{p-1} \leq C_p t$ for $0\leq t\leq 1$ with $C_p=1$ for $1\leq p\leq 2$ and $C_p=p-1$ for $p>2$. Hence the integrand in $\mathcal R_2$ can be bounded according to
$$
\left(\omega(x) - n^{-\alpha}\right)
\left( \omega(x)^{p-1} - \left(\omega(x) - n^{-\alpha}\right)^{p-1} \right)
\leq C_p n^{-\alpha} \omega(x)^{p-1}  
$$
and therefore after extending the integral to all $|x|<n$ and scaling $x\mapsto x/n$ we obtain
$\mathcal R_2 \leq C_p \int_{|x|<1} |x|^{\alpha-N} dx < \infty$. 
This proves \eqref{eq:r12}.

The case $N>ps$ is treated similarly, using a sequence of trial functions of the form
$$
u_{n,m}(x) := \begin{cases} 0 & \text{if } |x|\leq 1/n \ , \\ |x|^{-\alpha} - n^{-\alpha} & \text{if } 1/n\leq |x| \leq 1\ , \\ (1-n^{-\alpha}) \chi(|x|/m) & \text{if } |x| \geq 1 \ , \end{cases}
$$
where $0\leq\chi\leq 1$ is a smooth, compactly supported function with
$\chi(t)=1$ for small $t$. After letting $m\to \infty$, the
calculation proceeds along the same lines as above.


\subsection{The case $p=1$}

To conclude the proof of Theorem \ref{main} we need to characterize the minimizers in the case $p=1$. Actually, we present an alternative, simpler proof of inequality \eqref{eq:main} in this case based on a symmetrization argument.

Note that the right side of \eqref{eq:main} remains unchanged if $u$ is replaced by $|u|$, whereas the left side does not increase. Indeed, it strictly decreases unless $u$ is proportional to a non-negative function. Moreover, under symmetric decreasing rearrangement the left side of \eqref{eq:main} does not increase (see \cite{AL} and also Theorem \ref{rearrstrict}), whereas the right side does not decrease. Indeed, it strictly increases unless $|u|$ is symmetric decreasing (see \cite[Thm. 3.4]{LL}). This argument shows that any optimizer (provided it exists) will be proportional to a symmetric decreasing function. Below we show that \eqref{eq:main} holds with \emph{equality} for \emph{any} symmetric decreasing $u$. By the previous argument this provides an alternative proof of Theorem \ref{main} in the case $p=1$.

A symmetric decreasing function $u$ has a layer cake representation $u= \int_0^\infty \chi_t \,dt$ with $\chi_t$ the characteristic function of a ball centered at the origin with some radius $R(t)$. In this case the integral on the right side of \eqref{eq:main} equals
$$
\int_{\R^N} \frac{|u(x)|}{|x|^{s}} \,dx = \frac{|\Sph^{N-1}|}{N-s} \int_0^\infty R(t)^{N-s} \,dt \ ,
$$
and the integral on the left side equals
\begin{align*}
 \iint_{\R^N\times\R^N} \frac{|u(x)-u(y)|^p}{|x-y|^{N+ps} } \,dx\,dy
& = 2 \iint_{\{|x|< |y|\}} \frac{\left| \int (\chi_t(x)-\chi_t(y)) \,dt \right|}{|x-y|^{N+s} } \,dx\,dy \\
& = 2 \iiint_{\{|x|< R(t)<|y|\}} |x-y|^{-N-s} \,dx\,dy\,dt \\
& = 2 \iint_{\{|x|<1<|y|\}} |x-y|^{-N-s} \,dx\,dy \int_0^\infty R(t)^{N-s} \,dt.
\end{align*}
This shows that \eqref{eq:main} holds with equality for any symmetric decreasing function.


\section{Sharp Sobolev embedding into Lorentz spaces}
\label{sec:embedding}

Let $1\leq q<\infty$, $1\leq r \leq \infty$ and recall that the Lorentz space $L_{q,r}(\R^N)$ consists of those measurable functions $u$ on $\R^N$ for which the following quasinorm is finite,
\begin{align*}
 \|u\|_{q,r} & := \left( q \int_0^\infty \mu_u(t)^{r/q} t^{r-1}\,dt \right)^{1/r}
\text{if}\ 1<r<\infty\,,
\quad
\|u\|_{q,\infty} := \sup_{t>0} \mu_u(t)^{1/q} t \, .
\end{align*}
Here $\mu_u(t):=\{x\in\R^N : |u(x)|>t\}$ denotes the distribution function of $u$. Note that $L_{q,q}(\R^N)=L_q(\R^N)$ and that one has strict inclusions $L_{q,r}(\R^N) \subset L_{q,s}(\R^N)$ for $r< s$. A classical result by Peetre \cite{P} states that the standard Sobolev embedding $\dot W^s_p(\R^N)\subset L_{p^*}(\R^N)$, $p^*=Np/(N-ps)$ for $N>ps$, can be improved to $\dot W^s_p(\R^N)\subset L_{p^*,p}(\R^N)$. Peetre's proof is based on interpolation and requires $p>1$. We refer to \cite{T} for more elementary interpolation arguments, including the case $p=1$.

Here we give a direct proof of this embedding which avoids interpolation. It is based on symmetrization and leads to sharp constants.

\begin{theorem}[Sharp Sobolev inequality]\label{embedding}
Let $N\geq 1$, $0<s<1$, $1\leq p<N/s$ and put $p^*=Np/(N-ps)$. Then $\dot W^s_p(\R^N)\subset L_{p^*,p}(\R^N)$ and
\begin{equation}\label{eq:embedding}
 \|u\|_{p^*,p} \leq \left(\frac{N}{|\Sph^{N-1}|}\right)^{s/N} \, \mathcal C_{N,s,p}^{-1/p} \left( \iint_{\R^N\times\R^N} \frac{|u(x)-u(y)|^p}{|x-y|^{N+ps} } \,dx\,dy \right)^{1/p}
\end{equation}
for any $u\in \dot W^s_p(\R^N)$ with $\mathcal C_{N,s,p}$ from \eqref{eq:mainconst}. This constant is optimal. For $p=1$ equality holds iff $u$ is proportional to a non-negative function $v$ such that the level sets $\{v>\tau\}$ are balls for a.e. $\tau$. For $p>1$ the inequality is strict for any $u\not\equiv 0$.
\end{theorem}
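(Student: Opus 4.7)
My plan is to reduce the Sobolev embedding to the sharp Hardy inequality (Theorem \ref{main}) composed with the Almgren--Lieb rearrangement inequality for the fractional seminorm. The crucial observation is that, when evaluated on a symmetric decreasing function, the Lorentz norm $\|\cdot\|_{p^*,p}$ is (up to an explicit constant) the same weighted $L^p$ integral that appears on the right side of Hardy's inequality.

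First I would rewrite the Lorentz norm in terms of the decreasing rearrangement $u^*$. Starting from the definition in the text and applying the layer-cake identity, one obtains $\|u\|_{p^*,p}^p = \int_0^\infty u^*(s)^p s^{p/p^*-1}\,ds$. Since $p/p^*-1 = -ps/N$, and since the symmetric decreasing rearrangement satisfies $u^\sharp(x)=u^*(|\Sph^{N-1}||x|^N/N)$, a change of variable $s=|\Sph^{N-1}||x|^N/N$ gives the identity
\begin{equation*}
\|u\|_{p^*,p}^p = \left(\frac{N}{|\Sph^{N-1}|}\right)^{\!ps/N} \int_{\R^N} \frac{|u^\sharp(x)|^p}{|x|^{ps}}\,dx\,.
\end{equation*}
Next, apply the sharp Hardy inequality of Theorem \ref{main} to $u^\sharp$ (noting $u^\sharp\in\dot W^s_p(\R^N)$ since $N>ps$) to bound this integral by $\mathcal C_{N,s,p}^{-1}$ times the fractional seminorm of $u^\sharp$. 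Finally, invoke the Almgren--Lieb rearrangement inequality (Theorem \ref{rearrstrict} in the appendix) to replace $u^\sharp$ by $u$ on the right-hand side. Chaining the three estimates and taking $p$-th roots yields \eqref{eq:embedding}.

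For sharpness of the constant, I would note that the trial sequence used in Section \ref{sec:proof} to prove sharpness of $\mathcal C_{N,s,p}$ is already symmetric decreasing, so the rearrangement step is an equality and the same sequence saturates \eqref{eq:embedding}. For the equality characterization, observe that equality in \eqref{eq:embedding} forces equality in both the Hardy step and the rearrangement step. When $p>1$, the Hardy step is strict for any $u^\sharp\not\equiv 0$ by Theorem \ref{main}, so \eqref{eq:embedding} is strict for any $u\not\equiv 0$. When $p=1$, the Hardy step is already an equality for any symmetric decreasing function, so equality in \eqref{eq:embedding} reduces to equality in the rearrangement inequality; Theorem \ref{rearrstrict} then identifies the equality cases as (non-negative multiples of) functions whose superlevel sets are balls for a.e.\ level.

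The clean part of the argument is the computational identification of the Lorentz norm with the Hardy weight, which makes the embedding an immediate corollary of two already-available sharp inequalities. The main obstacle is the equality characterization for $p=1$, since it relies on the strict version of the Almgren--Lieb rearrangement inequality in Theorem \ref{rearrstrict}, whose proof is deferred to the appendix; without that refinement one obtains only the inequality and its sharpness, not the full classification of optimizers.
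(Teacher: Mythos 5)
Your proposal matches the paper's proof essentially step for step: the Lorentz-norm identity you compute is the paper's Lemma \ref{symmdecr}, and the reduction to symmetric decreasing functions via Theorem \ref{rearrstrict}, the application of the sharp Hardy inequality of Theorem \ref{main}, and the handling of sharpness and equality cases (strictness for $p>1$ from Theorem \ref{main}, the $p=1$ characterization from the strict rearrangement inequality) are all exactly what the paper does.
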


For $p=1$ and $u$ a characteristic function we obtain

 \begin{equation}\label{eq:isoperimetric}
|\Omega|^{(N-s)/N} \leq  \frac {2(N-s)}{ N \, \mathcal C_{N,s,1} } \left(\frac{N}{|\Sph^{N-1}|}\right)^{s/N} 
\iint_{\Omega\times\Omega^c} \frac{dx\,dy}{|x-y|^{N+s}} \, ,
\end{equation}
for any $\Omega\subset\R^N$ of finite measure, with equality iff $\Omega$ is a ball.
Moreover, using that
$$
\|u\|_{q,r} \leq \left(\frac qr\right)^{1/r} \left(\frac pq\right)^{1/p} \|u\|_{q,p}\, ,
\quad p<r\, ,
$$
(which is easily proved using the layer cake representation for $\mu_u^{p/q}$ and Minkowski's inequality) one obtains

\begin{corollary}\label{embeddingcor}
Let $N\geq 1$, $0<s<1$, $1\leq p<N/s$ and $p\leq r\leq\infty$. Put $p^*=Np/(N-ps)$. Then $\dot W^s_p(\R^N)\subset L_{p^*,r}(\R^N)$ and
\begin{equation}\label{eq:embeddingcor}
 \|u\|_{p^*,r} \leq \left(\frac {p^*}r\right)^{1/r} \left(\frac p{p^*}\right)^{1/p}
\left(\frac{N}{|\Sph^{N-1}|}\right)^{s/N} \, \mathcal C_{N,s,p}^{-1/p} 
\left( \iint_{\R^N\times\R^N} \frac{|u(x)-u(y)|^p}{|x-y|^{N+ps} } \,dx\,dy \right)^{1/p} \ .
\end{equation}
\end{corollary}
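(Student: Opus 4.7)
The plan is to obtain Corollary \ref{embeddingcor} by composing Theorem \ref{embedding} with the Lorentz-space inclusion
\[
\|u\|_{q,r} \le \Bigl(\frac{q}{r}\Bigr)^{1/r}\Bigl(\frac{p}{q}\Bigr)^{1/p}\|u\|_{q,p}, \qquad p \le r,
\]
specialized to $q = p^*$. Once this inclusion is established, substituting the bound of Theorem \ref{embedding} on $\|u\|_{p^*,p}$ produces \eqref{eq:embeddingcor} directly, so all the work lies in proving the Lorentz inclusion.

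For $p \le r < \infty$, I would start from the layer-cake identity
\[
\mu_u(t)^{p/q} = \int_0^\infty \chi_{\{\mu_u(t) > \lambda^{q/p}\}}\, d\lambda = \int_0^\infty \chi_{\{t < u^*(\lambda^{q/p})\}}\, d\lambda,
\]
where $u^*$ is the decreasing rearrangement of $|u|$ and the second equality uses that, up to a null set, $\mu_u(t) > \lambda^{q/p}$ is the same as $t < u^*(\lambda^{q/p})$. Writing $\mu_u(t)^{r/q} = (\mu_u(t)^{p/q})^{r/p}$ inside the defining integral for $\|u\|_{q,r}^r$, the above representation together with Minkowski's integral inequality of exponent $R := r/p \ge 1$ against the weight $t^{r-1}\,dt$ yields
\[
\Bigl( \int_0^\infty \mu_u(t)^{r/q} t^{r-1}\,dt \Bigr)^{p/r} \le \int_0^\infty \Bigl( \int_0^{u^*(\lambda^{q/p})} t^{r-1}\, dt \Bigr)^{p/r} d\lambda = r^{-p/r}\int_0^\infty u^*(\lambda^{q/p})^p\, d\lambda.
\]
The substitution $s = \lambda^{q/p}$ combined with the identity $\|u\|_{q,p}^p = \int_0^\infty u^*(s)^p s^{p/q-1}\,ds$ (a consequence of the same layer-cake representation plus Fubini applied to the definition of $\|u\|_{q,p}$) reduces the right-hand side to $r^{-p/r}(p/q)\|u\|_{q,p}^p$. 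Multiplying by $q$, taking $r$-th roots, and rearranging exponents produces the inclusion with exactly the claimed constant.

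The endpoint $r = \infty$ is easier to handle directly. Monotonicity of $\mu_u$ gives, for every $T > 0$, $\|u\|_{q,p}^p \ge q\int_0^T \mu_u(T)^{p/q} t^{p-1}\, dt = (q/p) \mu_u(T)^{p/q} T^p$, and taking the supremum in $T$ yields $\|u\|_{q,\infty} \le (p/q)^{1/p}\|u\|_{q,p}$, i.e.\ the $r = \infty$ case (which also matches the $r \to \infty$ limit of the finite-$r$ bound since $(q/r)^{1/r} \to 1$).

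I do not foresee a genuine obstacle. The sharp constant $(q/r)^{1/r}(p/q)^{1/p}$ is forced by characteristic functions of measurable sets, for which both Lorentz norms can be computed in closed form and the ratio equals this constant exactly; the Minkowski step with exponent $R = r/p$ is precisely the mechanism that achieves this sharpness. Once the Lorentz inclusion is proved, choosing $q = p^*$ and inserting the bound of Theorem \ref{embedding} on $\|u\|_{p^*,p}$ finishes the proof of Corollary \ref{embeddingcor}.
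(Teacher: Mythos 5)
Your proposal is correct and follows the same route as the paper: the paper also derives the corollary from Theorem \ref{embedding} combined with the Lorentz-scale inclusion $\|u\|_{q,r}\le(q/r)^{1/r}(p/q)^{1/p}\|u\|_{q,p}$, which it likewise attributes to the layer-cake representation of $\mu_u^{p/q}$ together with Minkowski's inequality. You simply fill in the details that the paper leaves as a remark, and your computation (including the change of variables and the $r=\infty$ endpoint) checks out.
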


Setting $r=p^*$ in \eqref{eq:embeddingcor} we recover the standard Sobolev inequality \eqref{eq:sobemb}. Using the bound \eqref{eq:ms} on the constant, we recover the result \eqref{eq:bbm} by Maz'ya and Shaposhnikova. 


The link between Theorem \ref{embedding} and the sharp Hardy inequality \eqref{eq:main} is

\begin{lemma}\label{symmdecr}
 Let $0<s\leq 1$ and $1\leq p<N/s$. Then for any non-negative, symmetric decreasing $u$ on $\R^N$
\begin{equation}\label{eq:symmdecr}
 \|u\|_{p^*,p} = \left( \frac N{|\Sph^{N-1}|} \right)^{s/N} 
\left(\int_{\R^N} \frac{u^{p}}{|x|^{ps}} \,dx \right)^{1/p} .
\end{equation}
\end{lemma}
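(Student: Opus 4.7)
The idea is that both sides of \eqref{eq:symmdecr} reduce, via elementary radial computations, to the same integral $\int_0^\infty t^{p-1} R(t)^{N-ps}\,dt$, where $R(t)$ is the radius of the super-level set $\{u>t\}$. This will be a direct verification, with no inequalities involved.

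First, since $u$ is non-negative and symmetric decreasing, each super-level set $\{x : u(x)>t\}$ is a ball $B_{R(t)}$ centered at the origin, so the distribution function is
$$
\mu_u(t) = \frac{|\Sph^{N-1}|}{N}\, R(t)^N.
$$
Plugging this into the definition of the Lorentz quasinorm and using the key identity $p/p^* = (N-ps)/N$, which comes from $p^* = Np/(N-ps)$, gives
$$
\|u\|_{p^*,p}^p = p^* \int_0^\infty \mu_u(t)^{p/p^*} t^{p-1}\,dt = p^* \left(\frac{|\Sph^{N-1}|}{N}\right)^{(N-ps)/N} \int_0^\infty R(t)^{N-ps}\, t^{p-1}\,dt.
$$

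Second, I would write the right-hand side of \eqref{eq:symmdecr} in polar coordinates, using the radial profile (still denoted $u$), and then apply the layer-cake representation $u(r)^p = p\int_0^\infty t^{p-1}\chi_{\{r<R(t)\}}\,dt$ followed by Fubini:
\begin{align*}
\int_{\R^N} \frac{u^p}{|x|^{ps}}\,dx
&= |\Sph^{N-1}| \int_0^\infty u(r)^p\, r^{N-ps-1}\,dr \\
&= p\,|\Sph^{N-1}| \int_0^\infty t^{p-1} \int_0^{R(t)} r^{N-ps-1}\,dr\,dt
= \frac{p\,|\Sph^{N-1}|}{N-ps} \int_0^\infty t^{p-1} R(t)^{N-ps}\,dt.
\end{align*}
(The use of $N-ps>0$ here is why we need $p<N/s$.)

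Finally, comparing the two displays, \eqref{eq:symmdecr} reduces to checking the numerical identity
$$
p^* \left(\frac{|\Sph^{N-1}|}{N}\right)^{(N-ps)/N} = \left(\frac{N}{|\Sph^{N-1}|}\right)^{ps/N} \frac{p\,|\Sph^{N-1}|}{N-ps},
$$
which is immediate from $p^* = Np/(N-ps)$ after collecting powers of $|\Sph^{N-1}|/N$ (the exponents $(N-ps)/N$ and $ps/N$ sum to $1$). There is no real obstacle here: everything is exact algebra once the layer-cake decomposition has reduced both sides to the same one-dimensional integral. The only things one must take care of are the strict positivity of $N-ps$ (so the inner radial integral is finite) and the justification of Fubini, which is immediate from non-negativity of the integrands.
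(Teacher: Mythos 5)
Your proof is correct and takes essentially the same approach as the paper: both reduce each side of \eqref{eq:symmdecr} to the same one-dimensional integral via the layer-cake representation, using the relation $p/p^* = (N-ps)/N$. The only cosmetic difference is that the paper first substitutes $w=u^p$ (which absorbs the $t^{p-1}$ factor into the distribution function) and parametrizes level sets by their measure $\mu(t)$ rather than their radius $R(t)$; the computations are otherwise identical.
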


\begin{proof}
 Introducing $w = u^p$ and $\mu=\mu_w$ we can rewrite the left side of \eqref{eq:symmdecr} as
\begin{equation*}
 \|u\|_{p^*,p}^p = \frac {p^*}p \int_0^\infty \mu(t)^{\tfrac p{p^*}} \,dt \, .
\end{equation*}
We write $w=\int_0^\infty \chi_t \,dt$ in its layer cake representation. Here $\chi_t$ is the characteristic function of $\{ x : w(x)> t \}$, which is a ball of radius $(N \mu(t)/ |\Sph^{N-1}|)^{1/N}$. Hence
\begin{equation*}
 \int_{\R^N} \frac{w}{|x|^{ps}} \,dx = \int_0^\infty \left( \int_{\R^N} \frac{\chi_t(x)}{|x|^{ps}} \,dx \right) \,dt
= \frac{N^{\tfrac{N-ps}N}}{N-ps} |\Sph^{N-1}|^{\tfrac{ps}N} \int_0^\infty \mu(t)^{\tfrac{N-ps}N} \,dt \, ,
\end{equation*}
proving \eqref{eq:symmdecr}.
\end{proof}

\begin{proof}[Proof of Theorem \ref{embedding}]
By symmetric decreasing rearrangement it suffices to prove \eqref{eq:embedding} for symmetric decreasing $u$ (see \cite{AL} and also Theorem \ref{rearrstrict}), for which it is an immediate consequence of Theorem \ref{main} and Lemma \ref{symmdecr}. The sharpness of the constant and the non-existence of optimizers for $p>1$ follows Theorem \ref{main}. For $p=1$ one uses the characterization of equality in the rearrangement inequality in Theorem~\ref{rearrstrict}.
\end{proof}

\begin{remark}\label{embeddingloc}
The `local' analog of \eqref{eq:embedding} for $s=1$ is 
\begin{equation}\label{eq:embeddingloc}
 \|u\|_{p^*,p} \leq
\left(\frac{N}{|\Sph^{N-1}|}\right)^{1/N} \, \frac p{N-p}
\left( \int_{\R^N} |\nabla u(x)|^p \,dx \right)^{1/p}
\end{equation}
for $N\geq 2$, $1\leq p<N$ and $p^*=Np/(N-p)$. It is 
due to \cite{ON,P}; the sharp constant in this case was found by Alvino \cite{Al}. 
Inequality \eqref{eq:embeddingloc} can be proved in the same way as Theorem \ref{embedding}, with the fractional Hardy inequality \eqref{eq:main} replaced by the classical Hardy inequality \eqref{eq:hardyclass}.
\end{remark}


\begin{appendix}

\section{A strict rearrangement inequality}
\label{sec:rearr}

Almgren and Lieb \cite{AL} have shown that the norm in $W^s_p(\R^N)$ does not increase under rearrangement. Since we have not found a characterization of the cases of equality in the literature, we include a proof. The special case $p=1$ has been used in the proof of Theorem \ref{embedding}.

\begin{theorem}\label{rearrstrict}
 Let $N\geq 1$, $0<s<1$, $1\leq p<N/s$ and $u\in\dot W^s_p(\R^N)$. Then
\begin{equation}\label{eq:rearrstrict}
 \iint_{\R^N\times\R^N} \frac{|u(x)-u(y)|^p}{|x-y|^{N+ps} } \,dx\,dy
\geq \iint_{\R^N\times\R^N} \frac{|u^*(x)-u^*(y)|^p}{|x-y|^{N+ps} } \,dx\,dy \ .
\end{equation}
If $p=1$, then equality holds iff $u$ is proportional to a non-negative function $v$ such that the level set $\{v>\tau\}$ is a ball for a.e. $\tau>0$. If $p>1$, then equality holds iff $u$ is proportional to a translate of a symmetric decreasing function.
\end{theorem}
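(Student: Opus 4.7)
I would begin by reducing to the case $u\ge 0$. Since $||u(x)|-|u(y)||^p \le |u(x)-u(y)|^p$ with strict inequality precisely when $u(x)$ and $u(y)$ are nonzero of opposite sign, while $|u|^* = u^*$, any equality case in \eqref{eq:rearrstrict} forces $u$ to be sign-definite. Henceforth assume $u\ge 0$, and set $A_t = \{u>t\}$.

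The next step is a level-set decomposition of $|u(x)-u(y)|^p$. For $p=1$ one has the layer cake $|u(x)-u(y)| = \int_0^\infty |\chi_{A_t}(x)-\chi_{A_t}(y)|\,dt$; for $p>1$ a direct computation (expressing $(a-b)^p$ as a double integral of $(\sigma-\tau)^{p-2}$ against the indicator $\mathbf{1}[\min(a,b)\le\tau<\sigma\le\max(a,b)]$) gives
\begin{equation*}
 |u(x)-u(y)|^p = p(p-1)\iint_{\tau<\sigma}(\sigma-\tau)^{p-2}\bigl[\chi_{A_\tau^c}(x)\chi_{A_\sigma}(y)+\chi_{A_\sigma}(x)\chi_{A_\tau^c}(y)\bigr]\,d\tau\,d\sigma.
\end{equation*}
Substituting into \eqref{eq:rearrstrict} and applying Fubini expresses the left-hand side as a weighted integral (over $t$ for $p=1$, or $(\tau,\sigma)$ with $\tau<\sigma$ for $p>1$) of
$I(A,B^c) := \iint \chi_A(x)\chi_{B^c}(y)|x-y|^{-N-ps}\,dx\,dy$, and the analogous formula with $A_t$ replaced by $A_t^*$ recovers the right-hand side of \eqref{eq:rearrstrict} applied to $u^*$.

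The heart of the matter is the following rearrangement lemma: for bounded $A\subset B\subset\R^N$ with $0<|A|<|B|$, one has $I(A,B^c)\ge I(A^*,(B^*)^c)$, with equality iff $A$ and $B$ are concentric balls, i.e.\ $A=A^*+v$, $B=B^*+v$ for some common $v\in\R^N$. To prove it, I would regularize the kernel by $K_\delta(x)=(|x|+\delta)^{-N-ps}$, which is strictly radially decreasing and integrable. Using $\chi_{B^c}=1-\chi_B$ and the cancellation of the (now finite) terms $|A|\int K_\delta = |A^*|\int K_\delta$, one obtains
\begin{equation*}
 I_\delta(A,B^c)-I_\delta(A^*,(B^*)^c) = \iint\chi_{A^*}\chi_{B^*}K_\delta\,dx\,dy - \iint\chi_A\chi_B K_\delta\,dx\,dy,
\end{equation*}
whose right-hand side is nonnegative by the classical Riesz rearrangement inequality, and strictly positive when $A,B$ are not concentric balls by Burchard's characterization of equality. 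Monotone convergence sends $I_\delta(A,B^c)\to I(A,B^c)$ and $I_\delta(A^*,(B^*)^c)\to I(A^*,(B^*)^c)$ as $\delta\to 0$, giving the inequality; differentiating in $\delta$ and again applying Riesz to the kernel $(|x|+\delta)^{-N-ps-1}$ shows that the deficit is nonincreasing in $\delta$, so strict positivity persists in the limit.

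Substituting the lemma back and using the representations of the seminorms yields \eqref{eq:rearrstrict}, together with the following dichotomy for equality. When $p>1$, equality forces, for a.e.\ pair $(\tau,\sigma)$ with $\tau<\sigma$, that $A_\tau$ and $A_\sigma$ are concentric balls with some common center $v(\tau,\sigma)$. Fixing a typical $\tau_0$ with $A_{\tau_0}$ a ball of center $v_0$, one sees that $v(\tau_0,\sigma)=v_0$ for a.e.\ $\sigma>\tau_0$, and reversing the roles of the two parameters shows all level sets of $u$ (up to a measure-zero exceptional set of $t$'s) are balls centered at the same $v_0$; hence $u$ is a translate of $u^*$. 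When $p=1$, the decomposition involves only a single parameter $t$, so the lemma constrains each individual $A_t$ to be a ball, but the centers may vary with $t$. The main technical obstacle is the regularization and the monotonicity argument that upgrades the strict Riesz inequality at each $\delta>0$ to strict inequality in the limit; everything else is bookkeeping.
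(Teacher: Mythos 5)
Your proof is correct but takes a genuinely different route from the paper's. You decompose $|u(x)-u(y)|^p$ via a bilinear layer cake over pairs of levels $\tau<\sigma$, reducing the entire theorem to a single lemma about $I(A,B^c)$ for nested sets $A\subset B$ of finite measure, and you prove that lemma by regularizing to $K_\delta(x)=(|x|+\delta)^{-N-ps}$, applying Riesz and Lieb's strict rearrangement inequality, and then observing that the deficit $I_\delta(A,B^c)-I_\delta(A^*,(B^*)^c)$ is nonincreasing in $\delta$ (its $\delta$-derivative is controlled by Riesz with the kernel $(|x|+\delta)^{-N-ps-1}$), so strict positivity survives the limit $\delta\to0$. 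The paper instead writes $|x-y|^{-N-ps}$ as a Gaussian superposition \eqref{eq:gaussian} and proves the more general Lemma~\ref{rearrgeneral} for an arbitrary convex $J$, using the one-parameter layer cake $J_+(u(x)-u(y))=\int_0^\infty J_+'(u(x)-\tau)\chi_{\{u\le\tau\}}(y)\,d\tau$ together with the cancellation $\chi_{\{u\le\tau\}}=1-\chi_{\{u>\tau\}}$; that cancellation requires $\int J_+'(u-\tau)\,dx<\infty$, which forces a separate truncation argument (Steps 2--3 there) to pass from bounded $u$ to general $u$. Your double-level decomposition sidesteps that finiteness issue entirely, since only level sets of finite measure enter, at the cost of being tailored to the power function $J(t)=|t|^p$, whereas the paper's version covers general convex $J$. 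Two minor remarks: your key lemma only needs $A$, $B$ of finite measure (not bounded, which the level sets of a $\dot W^s_p$ function need not be); and since $K_\delta$ is strictly symmetric decreasing, Lieb's characterization of equality already suffices, so invoking Burchard's theorem is unnecessary.
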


Though we do not use the `only if' statement for $p>1$ in this paper, we have included it since we think it is interesting in its own right. It might be compared with the result in the `local case', namely, that if equality in $\int |\nabla u|^p \,dx \geq \int |\nabla u^*|^p \,dx$ is attained for a non-negative $u$, then the level sets of $u$ are balls, but $u$ is not necessarily a translate of a symmetric decreasing function; see \cite{BZ}.

We start by considering a slightly more general situation. For $J$ a non-negative, convex function on $\R$ with $J(0)=0$ and $k$ a non-negative function on $\R^N$, we let
$$
E[u] := \iint_{\R^N\times\R^N} J(u(x)-u(y)) k(x-y) \,dx\,dy \, .
$$

\begin{lemma}
 \label{rearrgeneral}
Let $J$ be a non-negative, convex function on $\R$ with $J(0)=0$ and let $k\in L_1(\R^N)$ be a symmetric decreasing function. Then for all non-negative measurable $u$ with $E[u]$ and $|\{u>\tau\}|$ finite for all $\tau>0$ one has
\begin{equation}
 \label{eq:rearrgeneral}
E[u] \geq E[ u^*] \, .
\end{equation}
If, in addition, $J$ is strictly convex and $k$ is strictly decreasing, then equality holds iff $u$ is a translate of a symmetric decreasing function. If $J(t)=|t|$, then equality holds iff the level sets $\{u>\tau\}$ are balls for a.e. $\tau>0$.
\end{lemma}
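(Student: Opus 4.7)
The plan is to reduce the inequality to the Riesz rearrangement inequality, one super-level set at a time, via an integral representation of $J$ and the layer-cake formula. First, by the symmetry $k(x-y)=k(y-x)$ I may replace $J(t)$ by its even part without changing $E[u]$, and so assume $J$ is even, nondecreasing and convex on $[0,\infty)$ with $J(0)=0$. Any such $J$ admits the representation
$$
J(|t|) = c\,|t| + \int_0^\infty (|t|-r)_+ \, d\mu(r),
\qquad c := J'(0^+) \geq 0,
$$
with $\mu$ a non-negative Radon measure on $(0,\infty)$ (essentially the distributional $J''$). By Fubini it then suffices to treat the building blocks $(|t|-r)_+$ uniformly in $r\geq 0$, the case $r=0$ recovering the $|t|$-piece.

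The central computation is the layer-cake identity: for $u\geq 0$, denoting $E_\tau:=\{u>\tau\}$ (of finite measure by hypothesis),
$$
(|u(x)-u(y)|-r)_+ = \int_0^\infty \bigl[\chi_{E_{\tau+r}}(x)\chi_{E_\tau^c}(y) + \chi_{E_{\tau+r}}(y)\chi_{E_\tau^c}(x)\bigr]\, d\tau,
$$
which one verifies directly by assuming $u(x)\geq u(y)$ and evaluating both sides. Multiplying by $k(x-y)$, integrating, expanding $\chi_{E_\tau^c}=1-\chi_{E_\tau}$, and using $k\in L_1(\R^N)$ together with the finite measure of $E_{\tau+r}$, the rearrangement inequality for the $r$-block reduces, after integration in $\tau$, to the standard Riesz rearrangement inequality
$$
\iint \chi_{E_{\tau+r}}(x)\chi_{E_\tau}(y)\, k(x-y)\,dx\,dy \;\leq\; \iint \chi_{E_{\tau+r}^*}(x)\chi_{E_\tau^*}(y)\, k(x-y)\,dx\,dy
$$
for each $\tau$, which holds because $k$ is symmetric decreasing and both sets have finite measure. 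Integrating against $c\,\delta_0(dr)+d\mu(r)$ then yields $E[u]\geq E[u^*]$.

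For the equality cases I would appeal to Burchard's sharp form of Riesz's inequality: when $k$ is strictly symmetric decreasing, equality above for a given pair of finite-measure sets forces both sets to be balls translated by a \emph{common} vector. If $J$ is strictly convex, $\mu$ charges every nonempty open subinterval of $(0,\infty)$; equality in $E[u]\geq E[u^*]$ thus forces this common-translation condition to hold simultaneously for a continuum of pairs $(E_{\tau+r},E_\tau)$, which pins down a single translation vector $v$ independent of $\tau$ and $r$ and forces every level set of $u$ to equal $E_\tau^*+v$, i.e.\ $u$ is a translate of $u^*$. In the case $J(t)=|t|$, only the block $r=0$ contributes; the equality condition then only demands that each $E_\tau$ be a ball, with no coupling between different levels, giving exactly the stated characterization. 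The main delicate point is this equality analysis, where one must combine Burchard's theorem across a continuous family of level-set pairs and carefully argue that the translation vector is independent of $\tau$ and $r$; the monotonicity itself is routine once the representation of $J$ and the layer cake are in place.
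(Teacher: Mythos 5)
Your proof is correct and takes a genuinely different route from the paper's. The paper splits $J=J_++J_-$ into its parts supported on $\R_\pm$, then uses the single layer-cake identity $J_+(u(x)-u(y))=\int_0^\infty J_+'(u(x)-\tau)\chi_{\{u\le\tau\}}(y)\,d\tau$, so that after expanding $\chi_{\{u\le\tau\}}=1-\chi_{\{u>\tau\}}$ the Riesz integrand is the non-characteristic function $J_+'(u(\cdot)-\tau)$. Because $J_+'$ may be unbounded, the paper then has to run the argument first for bounded $u$ (Step 1) and remove the restriction by a truncation $u_M=\min\{u,M\}$ together with a careful splitting $u=u_M+v_M$ and a second layer-cake on the cross term (Steps 2--3). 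You instead reduce to the even part of $J$, write it as $c|t|+\int_0^\infty(|t|-r)_+\,d\mu(r)$, and then layer-cake each hockey-stick block, which reduces everything to Riesz's inequality for pairs of characteristic functions $(\chi_{E_{\tau+r}},\chi_{E_\tau})$; this is a genuine gain, since the finiteness of $|E_{\tau+r}|$ for $\tau+r>0$ makes all intermediate terms finite without any boundedness assumption on $u$, so the paper's Steps 2--3 become unnecessary. For the equality cases you invoke Burchard's theorem where the paper uses Lieb's strict rearrangement inequality \cite{L} --- these are interchangeable here, since for $k$ strictly symmetric decreasing both force the level sets to be common translates of balls. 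The one place you should be a bit more explicit is the patching argument: after concluding that for $\mu$-a.e.\ $r>0$ and a.e.\ $\tau>0$ the sets $E_{\tau+r}$ and $E_\tau$ are concentric balls, you should argue (e.g.\ using the nesting $E_{\tau'}\subset E_\tau$ for $\tau'>\tau$ and the density of the pairs $(\tau,\tau+r)$ for which equality holds) that the common center is independent of $\tau$; this is the analogue of the step in the paper where it is checked that $a_\tau$ is independent of $\tau$. Also note that your even-part reduction still captures the paper's slightly stronger remark (strict convexity on either half-line suffices), since the even part of a $J$ that is strictly convex on $\R_+$ or $\R_-$ alone is automatically strictly convex.
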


Inequality \eqref{eq:rearrgeneral} under the additional assumptions $J(t)=J(-t)$ and $\int J(u(x))\,dx<\infty$ is due to Almgren and Lieb \cite{AL}. The characterization of cases of equality seems to be new.

\begin{proof}
As in \cite[Thm. 3.5]{LL} we can write $J=J_++J_-$ with $J_+(t)=J(t)$ for $t\geq 0$ and $J_+(t)=0$ for $t<0$. We decompose $E=E_++E_-$ accordingly. Below we prove the assertion of the lemma with $E$ replaced by $E_+$. The assertion for $E_-$ (and hence for the original $E$) follows by exchanging the roles of $x$ and $y$ and replacing $J(t)$ by $J(-t)$. Note that this argument yields a characterization of cases of equality under the weaker assumption that $J$ is strictly convex on either $\R_+$ or $\R_-$.

\emph{Step 1.} We first prove the assertion under the additional assumption that $u$ is bounded. Since $J_+$ is convex it has a right derivative $J_+'$, which is non-negative and non-decreasing. Writing $J_+(t)=\int_0^t J_+'(\tau)\,d\tau$ one finds
\begin{equation*}
 J_+(u(x)-u(y)) = \int_0^\infty J_+'(u(x)-\tau) \chi_{\{u\leq \tau\}}(y) \,d\tau \, ,
\end{equation*}
and hence by Fubini
\begin{equation}\label{eq:rearrgenrepr}
 E_+[u] = \int_0^\infty e^+_\tau[u] \,d\tau
\end{equation}
where
\begin{equation}
 e^+_\tau[u] :=  \iint_{\R^N\times\R^N} J_+'(u(x)-\tau) k(x-y) \chi_{\{u\leq \tau\}}(y) \,dx\,dy \ .
\end{equation}
Since $u$ is bounded and $|\{u>\tau\}|<\infty$ one has $\int_{\R^N} J_+'(u(x)-\tau) \,dx<\infty$. Writing $\chi_{\{u\leq \tau\}}=1-\chi_{\{u> \tau\}}$ we obtain
\begin{equation}\label{eq:rearrgenrepr2}
 e^+_\tau[u] = \|k\|_1 \int_{\R^N} J_+'(u(x)-\tau) \,dx -
\iint_{\R^N\times\R^N} J_+'(u(x)-\tau) k(x-y) \chi_{\{u> \tau\}}(y) \,dx\,dy \, .
\end{equation}
The first integral on the right side of \eqref{eq:rearrgenrepr2} does not change under rearrangement. Moreover, we note that $\left(J_+'(u-\tau)\right)^*=J_+'(u^*-\tau)$. By Riesz's rearrangement inequality, the double integral on the right side of \eqref{eq:rearrgenrepr2} does not decrease under rearrangement, proving $e^+_\tau[u]\geq e^+_\tau[u^*]$ and hence $E_+[u]\geq E_+[u^*]$.

To characterize the cases of equality assume that $k$ is strictly decreasing and $E_+[u]= E_+[u^*]$ for some bounded $u$. Then by \eqref{eq:rearrgenrepr2} $e^+_\tau[u]=e^+_\tau[u^*]$ for a.e. $\tau$, and by Lieb's strict rearrangement inequality \cite{L} for a.e. $\tau>0$ there is an $a_\tau\in\R^N$ such that $\chi_{\{u< \tau\}}(x)=\chi_{\{u^*< \tau\}}(x-a_\tau)$ and 
\begin{equation}
J_\pm'(u(x)-\tau) = J_\pm'(u^*(x-a_\tau)-\tau)
\end{equation}
for a.e. $x$. If $J_+(t)=t_+$ for all $t$, this means that $\{u>\tau\}$ is a ball for a.e. $\tau>0$. Now assume that $J_+$ is strictly convex on $\R_+$. Then $J_+'$ is strictly increasing on $\R_+$ and we conclude that $(u(x)-\tau)_+ = (u^*(x-a_\tau)-\tau)_+$ for a.e. $\tau$ and $x$. This is easily seen to imply that $a_\tau$ is independent of $\tau$, and hence $u$ is a translate of a symmetric decreasing function.

\emph{Step 2.} Now we remove the assumption that $u$ is bounded, that is, we claim that \eqref{eq:rearrgeneral} holds for any non-negative $u$ with $E[u]$ and $|\{u>\tau\}|$ finite for all $\tau$. To see this, replace $u$ by $u_M=\min\{u,M\}$ and note that $(u_M)^*=(u^*)_M=:u_M^*$ and $E[u_M]\leq E[u]$. By monotone convergence the claim follows easily from $E[u_M]\geq E[u_M^*]$.

\emph{Step 3.} Finally, we characterize the cases of equality for general $u$. Assume that $k$ is strictly decreasing and $E_+[u]=E_+[u^*]$ for some non-negative $u$ with $E[u]$ and $|\{u>\tau\}|$ finite for all $\tau$. For any $M>0$ we decompose
\begin{equation*}
 u=u_M + v_M\, ,
\quad u_M:=\min\{u,M\}\,,
\end{equation*}
and find
\begin{equation}\label{eq:rearrgenrepr3}
 E_+[u] = E_+[u_M] + E_+[v_M] + \iint_{\R^N\times\R^N} F_M(v_M(x),u_M(y)) k(x-y) \,dx\,dy
\end{equation}
with
\begin{equation*}
 F_M(v,u) := J_+(v+M-u) - J_+(v) - J_+(M-u) \, .
\end{equation*}
Note that since $J_+$ is convex with $J_+(0)=0$, one has $F_M(v,u)\geq 0$ for $0\leq u \leq M$ and $v\geq 0$. Hence all three terms on the right side of \eqref{eq:rearrgenrepr3} are non-negative and finite. Note that replacing $u$ by $u^*$ amounts to replacing $u_M$ and $v_M$ by $u_M^*$ and $v_M^*$, respectively. Below we shall prove that the double integral in \eqref{eq:rearrgenrepr3} does not increase if both $u_M$ and $v_M$ are replaced by $u_M^*$ and $v_M^*$. Moreover, by Step 2, $E^+[v_M]\geq E^+[v_M^*]$. Hence if $E^+[u]=E^+[u^*]$, then $E^+[u_M]=E^+[u_M^*]$ for all $M>0$. 
Using the characterization from Step 1 one easily concludes that $u$ is of the form stated in the lemma.

It suffices to prove that the double integral in \eqref{eq:rearrgenrepr3} does not increase under rearrangement. Since $J'_+$ is increasing, we have $J_+'(t)=\int_0^t d\mu(\tau)$ for a non-negative measure $\mu$. Hence $J_+(t)=\int_0^\infty (t-\tau)_+ \,d\mu(\tau)$ and
\begin{equation*}
F_M(v,u) = \int_0^\infty\!\! f_{M,\tau}(v,u) \,d\mu(\tau) \, ,
\
f_{M,\tau}(v,u) := (v+M-u-\tau)_+ - (v-\tau)_+ - (M-u-\tau)_+ \, .
\end{equation*}
Since the integrand is non-negative for $0\leq u \leq M$ and $v\geq 0$, it suffices to prove that for all $\tau$ the double integral
\begin{equation*}
\iint_{\R^N\times\R^N} f_{M,\tau}(v_M(x),u_M(y)) k(x-y) \,dx\,dy
\end{equation*}
does not increase under rearrangement. We decompose further $f_{M,\tau} = f_{M,\tau}^{(1)} - f_{M,\tau}^{(2)}$ where
$f_{M,\tau}^{(1)}(v) := v - (v-\tau)_+$ and
$$
f_{M,\tau}^{(2)}(v,u) := v - (v+M-u-\tau)_+ + (M-u-\tau)_+ = \min\left\{v,(u-M+\tau)_+\right\} \, .
$$
Since $f_{M,\tau}^{(1)}$ is bounded and the support of $v_M$ has finite measure, the integral
$$
\iint_{\R^N\times\R^N} f_{M,\tau}^{(1)}(v_M(x)) k(x-y) \,dx\,dy
= \|k\|_1 \int_{\R^N} f_{M,\tau}^{(1)}(v_M(x)) \,dx
$$
is finite and invariant under rearrangement of $v_M$. Finally, by Fubini we can write
\begin{align*}
& \iint_{\R^N\times\R^N} f_{M,\tau}^{(2)}(v_M(x),u_M(y)) k(x-y) \,dx\,dy \\
& \qquad = \int_0^\infty \left( \iint_{\R^N\times\R^N} \chi_{\{v_M>t\}}(x) k(x-y) \chi_{\{(u_M-M+\tau)_+>t\}}(y) \,dx\,dy \right) \,dt \, .
\end{align*}
By Riesz's rearrangement inequality, this does not decrease under rearrangement, completing the proof.
\end{proof}

\begin{proof}[Proof of Theorem \ref{rearrstrict}]
First note that $|u(x)-u(y)|\geq \left||u(x)|-|u(y)|\right|$, and that equality for all $x, y$ holds iff $u$ is proportional to a non-negative function. Hence we can restrict ourselves to non-negative functions. Writing as in \cite{AL}
\begin{equation}\label{eq:gaussian}
 \iint_{\R^N\times\R^N} \frac{|u(x)-u(y)|^p}{|x-y|^{N+ps} } \,dx\,dy 
= \frac 1{\Gamma((N+ps)/2)} \int_0^\infty I_\alpha[u] \alpha^{(N+ps)/2-1}\,d\alpha 
\end{equation}
with
\begin{equation*}
 I_\alpha[u]:=\iint_{\R^N\times\R^N} |u(x)-u(y)|^p e^{-\alpha |x-y|^2} \,dx\,dy \ ,
\end{equation*}
the assertion follows from Lemma \ref{rearrgeneral}.
\end{proof}

\end{appendix}


\bibliographystyle{amsalpha}

\end{document}